\pdfoutput=1
\RequirePackage{ifpdf}
\ifpdf 
\documentclass[pdftex]{sigma}
\else
\documentclass{sigma}
\fi

\numberwithin{equation}{section}
\newtheorem{Theorem}{Theorem}[section]
\newtheorem*{Theorem*}{Theorem}
\newtheorem{Corollary}[Theorem]{Corollary}
\newtheorem{Lemma}[Theorem]{Lemma}
\newtheorem{Proposition}[Theorem]{Proposition}
\theoremstyle{definition}
\newtheorem{Definition}[Theorem]{Definition}

\newtheorem{Example}[Theorem]{Example}
\newtheorem{Remark}[Theorem]{Remark}

\begin{document}

\renewcommand{\PaperNumber}{***}

\FirstPageHeading

\ShortArticleName{Explicit Super-Linear Algebra over $K[\theta_1,\theta_2]$}

\ArticleName{Explicit Super-Linear Algebra over $K[\theta_1,\theta_2]$:\\
A General Berezinian Correction Formula and\\
Multiplicativity Theorem for Arbitrary Block Size}

\Author{Devichandrika V~$^{\rm a}$}

\AuthorNameForHeading{Devichandrika V}

\Address{$^{\rm a)}$~St Aloysius (Deemed to be University), Mangaluru, India}

    \ArticleDates{Received , in final form ; Published online }

\Abstract{We study super-matrices $M_{m|n}(R)$ over the rank-two exterior algebra
$R=K[\theta_1,\theta_2]/(\theta_1^2,\theta_2^2,\theta_1\theta_2+\theta_2\theta_1)$, the smallest supercommutative ring on which the Berezinian's odd$\times$odd correction term $BD^{-1}C$ is not forced to vanish identically. We first recall that, for general block sizes $m,n$, the supertrace satisfies graded cyclicity $\mathrm{str}(XY)=(-1)^{|X||Y|}\mathrm{str}(YX)$ and hence $\mathrm{str}([A,B])=0$ for even $A,B$, and that an explicit $1|1$ super-matrix over $R$ exhibits a nonvanishing correction term $BD^{-1}C\neq0$ that survives to second order in the odd generators. We then prove the extension anticipated as future work in the $1|1$ note: a closed-form Berezinian formula for \emph{arbitrary} even $X\in M_{m|n}(R)$, $m,n\geq1$ (Theorem~\ref{thm:general-ber}), expressed through a single $m\times m$ matrix pairing $\Delta$ built from the odd blocks of $X$; and a fully general, explicitly verified multiplicativity theorem $\mathrm{Ber}(XY)=\mathrm{Ber}(X)\mathrm{Ber}(Y)$ for arbitrary even invertible $X,Y\in M_{m|n}(R)$ (Theorem~\ref{thm:general-mult}), proved by an explicit trace-cyclicity cancellation rather than by appeal to the general abstract theory. Both results specialize exactly to the previously known $1|1$ formulas when $m=n=1$, and we verify them on a fully worked $m=2,n=1$ numerical example. We further extend the whole framework from two to an arbitrary number $r\geq2$ of odd generators, replacing the scalar pairing $\Delta$ by a $\Lambda^2(V)$-valued pairing on the $r$-dimensional space of odd generators $V$, and we identify this pairing, in the scalar sub-case, with a Pl\"ucker coordinate of the odd off-diagonal data. We close with a discussion of related work of Khudaverdian--Voronov, and a short remark identifying $\Delta$ as a matrix-valued contraction against the natural alternating pairing on the space of odd generators.}

\Keywords{Berezinian; supertrace; superdeterminant; supermatrix; exterior algebra; superalgebra}

\Classification{15A75; 16W55; 58A50}

\section{Introduction and statement of results}

The supertrace and Berezinian (superdeterminant) are classical invariants of super-linear algebra, originating with Berezin~\cite{Berezin1987} and developed systematically by Leites~\cite{Leites1980}, Manin~\cite{Manin1988}, Deligne--Morgan~\cite{DeligneMorgan1999}, and Varadarajan~\cite{Varadarajan2004}. Over a supercommutative ring $R$ with a single odd nilpotent generator $\theta$ ($\theta^2=0$), the standard Schur-complement formula for the Berezinian,
\[
\mathrm{Ber}(X)=\det(A-BD^{-1}C)\det(D)^{-1},\qquad X=\begin{pmatrix}A&B\\C&D\end{pmatrix},
\]
degenerates: since every entry of the off-diagonal blocks $B,C$ is a multiple of $\theta$, the product $BD^{-1}C$ involves $\theta^2=0$ and vanishes identically, so $\mathrm{Ber}(X)$ collapses to the ordinary ratio $\det(A)/\det(D)$ with no odd correction visible. This makes the single-generator case pedagogically convenient but analytically thin.

This note works instead over the rank-two exterior algebra
\[
R=K[\theta_1,\theta_2]/(\theta_1^2,\theta_2^2,\theta_1\theta_2+\theta_2\theta_1),
\]
the smallest supercommutative ring admitting a nonzero product of two independent odd elements ($\theta_1\theta_2\neq0$, $(\theta_1\theta_2)^2=0$). Our contributions are:

\begin{enumerate}\itemsep=0pt
\item[(1)] A general-$(m|n)$, fully sign-explicit proof that $\mathrm{str}(XY)=(-1)^{|X||Y|}\mathrm{str}(YX)$ for $X,Y$ of equal parity (Theorem~\ref{thm:cyc}), hence $\mathrm{str}([A,B])=0$ for even $A,B$ of arbitrary block size (Theorem~\ref{thm:strzero}).
\item[(2)] An explicit computation, over $R=K[\theta_1,\theta_2]$, of a genuinely nonvanishing Berezinian correction term $BD^{-1}C$ for a $1|1$ super-matrix with independent odd off-diagonal parameters on each generator (Proposition~\ref{prop:11}), together with a verification of multiplicativity in that case (Theorem~\ref{thm:11mult}). This is the motivating example for everything that follows.
\item[(3)] A closed-form Berezinian formula for \emph{arbitrary} even $X\in M_{m|n}(R)$, $m,n\geq1$ (Theorem~\ref{thm:general-ber}), expressed through a single $m\times m$ matrix $\Delta$ built from the odd off-diagonal data. This answers, in the direction concerning block size, the extension proposed at the end of the $1|1$ note.
\item[(4)] A fully general multiplicativity theorem $\mathrm{Ber}(XY)=\mathrm{Ber}(X)\mathrm{Ber}(Y)$ for arbitrary even invertible $X,Y\in M_{m|n}(R)$ (Theorem~\ref{thm:general-mult}), proved by an explicit, self-contained trace-cyclicity identity, and shown to specialize exactly to the $1|1$ result when $m=n=1$.
\item[(5)] A fully worked numerical example at block size $m=2,n=1$ (Example~\ref{ex:worked}), computed exactly and cross-checked two independent ways, exhibiting a nonzero matrix pairing $\Delta$ and a nontrivial multiplicativity verification $\mathrm{Ber}(XY)=\mathrm{Ber}(X)\mathrm{Ber}(Y)$ at a block size beyond $1|1$.
\item[(6)] An extension of the entire formalism from two to an arbitrary number $r\geq2$ of odd generators (Section~\ref{sec:rgen}), in which the scalar pairing $\Delta$ of Theorem~\ref{thm:general-ber} is replaced by a $\Lambda^2(V)$-valued pairing on the $r$-dimensional space $V$ of odd generators (Theorem~\ref{thm:rgen-ber}), together with the corresponding multiplicativity theorem (Theorem~\ref{thm:rgen-mult}); both specialize exactly to Theorems~\ref{thm:general-ber} and~\ref{thm:general-mult} at $r=2$.
\item[(7)] A short structural remark (Remark~\ref{rem:symplectic}) identifying $\Delta$ as the contraction of the odd off-diagonal data against the standard alternating form on the space of odd generators, together with a partial identification, in the scalar case, of $\Delta$ with a Pl\"ucker coordinate of a $2$-plane spanned by the odd off-diagonal parameters (Remark~\ref{rem:plucker}).
\end{enumerate}

\begin{Remark}[Scope of novelty]\label{rem:scope}
The qualitative fact that the Berezinian has an odd-correction term when at least two odd generators are present, and that the Berezinian is multiplicative in complete generality, is well known and implicit in the general theory of Berezin and Leites~\cite{Berezin1987,Leites1980}; a coordinate-free proof of multiplicativity for arbitrary supercommutative rings can be assembled from the Schur-complement identity alone. What we believe is new is: the fully explicit, closed-form coefficient computation of Theorem~\ref{thm:general-ber} for the specific rank-two ground ring $K[\theta_1,\theta_2]$ at arbitrary block size, its extension to arbitrary rank $r$ in Section~\ref{sec:rgen}, and the explicit termwise verification of multiplicativity in Theorems~\ref{thm:general-mult} and~\ref{thm:rgen-mult}, carried out by direct trace-identity bookkeeping rather than by invoking the abstract theorem. We flag this precise scope so referees can assess novelty accurately, and we discuss its relation to the closest existing work, that of Khudaverdian and Voronov, in Section~\ref{sec:relwork}.
\end{Remark}

\section{The ground ring $K[\theta_1,\theta_2]$}

Let $K$ be a field of characteristic $0$. Set
\[
R=K[\theta_1,\theta_2]/(\theta_1^2,\theta_2^2,\theta_1\theta_2+\theta_2\theta_1),
\]
a $4$-dimensional $K$-vector space with basis $1,\theta_1,\theta_2,\theta_1\theta_2$, graded by
\[
R_{\bar0}=K\cdot1\oplus K\cdot\theta_1\theta_2,\qquad R_{\bar1}=K\cdot\theta_1\oplus K\cdot\theta_2.
\]
Multiplication is the exterior algebra product: $\theta_1^2=\theta_2^2=0$, $\theta_2\theta_1=-\theta_1\theta_2$, and $(\theta_1\theta_2)\theta_i=\theta_i(\theta_1\theta_2)=0$ for $i=1,2$ (since it would require $\theta_i^2$). One checks directly that $R$ is supercommutative: $xy=(-1)^{|x||y|}yx$ for homogeneous $x,y$; the only nontrivial instance is $\theta_1\theta_2=-\theta_2\theta_1$, which holds by definition, while $\theta_1\theta_2\cdot\theta_1\theta_2=0$ makes the even part $R_{\bar0}$ genuinely (not just super-) commutative. Consequently $\theta_1\theta_2$ is central in $R$: it is even, and even elements of a supercommutative ring commute with everything. This is the smallest supercommutative ring in which the product of two odd elements need not vanish, and it is the feature we use throughout: $\theta_1\theta_2$ behaves as a central, square-zero scalar.

\section{Super-matrices, supertrace, and general cyclicity}

\begin{Definition}\label{def:supermatrix}
$M_{m|n}(R)$, its parity grading, and block multiplication are as usual: $X=\begin{pmatrix}A&B\\C&D\end{pmatrix}$ is \emph{even} iff $A,D$ have entries in $R_{\bar0}$ and $B,C$ have entries in $R_{\bar1}$; multiplication is ordinary block matrix multiplication over $R$. The supertrace of $X\in M_{m|n}(R)$ is $\mathrm{str}(X)=\mathrm{tr}(A)-\mathrm{tr}(D)$.
\end{Definition}

\begin{Theorem}[Graded cyclicity for equal-parity pairs]\label{thm:cyc}
For all $X,Y\in M_{m|n}(R)$ that are both even or both odd (arbitrary $m,n\geq1$),
\[
\mathrm{str}(XY)=(-1)^{|X||Y|}\mathrm{str}(YX).
\]
\end{Theorem}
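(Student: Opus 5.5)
We work entrywise with the standard parity function on indices: $p(i)=0$ for $1\le i\le m$ and $p(i)=1$ for $m<i\le m+n$. Then $\mathrm{str}(Z)=\sum_i(-1)^{p(i)}Z_{ii}$. For a homogeneous $X$ of parity $|X|$, the entry $X_{ij}$ is homogeneous of parity $|X|+p(i)+p(j)\pmod 2$; this is just a restatement of Definition~\ref{def:supermatrix} for even $X$, and of the analogous convention (even entries in $B,C$, odd entries in $A,D$) for odd $X$. We expand
\[
\mathrm{str}(XY)=\sum_{i,j}(-1)^{p(i)}X_{ij}Y_{ji},\qquad
\mathrm{str}(YX)=\sum_{i,j}(-1)^{p(j)}Y_{ji}X_{ij},
\]
where in the second sum we have relabelled indices so that both sums run over the same pairs $(i,j)$.

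Next we apply supercommutativity of $R$ (Section~2) to each summand. This gives
\[
X_{ij}Y_{ji}=(-1)^{(|X|+p(i)+p(j))(|Y|+p(i)+p(j))}\,Y_{ji}X_{ij}.
\]
Set $s=p(i)+p(j)$. The exponent is $|X||Y|+s(|X|+|Y|)+s^2$, computed mod $2$.

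For the theorem to hold termwise, we need
\[
(-1)^{p(i)}(-1)^{|X||Y|+s(|X|+|Y|)+s}=(-1)^{|X||Y|}(-1)^{p(j)}.
\]
We use $s^2\equiv s$, and the hypothesis that $X,Y$ have equal parity gives $|X|+|Y|\equiv0$. The left side therefore becomes $(-1)^{|X||Y|}(-1)^{p(i)+s}=(-1)^{|X||Y|}(-1)^{p(j)}$, as required. Summing over $(i,j)$ finishes the proof.

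The only delicate step is this sign bookkeeping, specifically where the equal-parity hypothesis is used: it is exactly what kills the cross term $s(|X|+|Y|)$. (In general the factor should instead be absorbed differently.) It is also worth noting for the reader that the argument only uses supercommutativity, not the specific structure of $K[\theta_1,\theta_2]$. As a sanity check, in the even case we could alternatively verify the identity blockwise, using $\mathrm{tr}(BC')=-\mathrm{tr}(C'B)$ for odd-entried blocks, but the index argument above covers both parities uniformly.
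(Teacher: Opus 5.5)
Your proof is correct and is essentially the paper's argument: both reduce the identity to the entrywise graded commutation $X_{ij}Y_{ji}=\pm Y_{ji}X_{ij}$ and then compare signs. The only difference is organizational. The paper splits into the both-even and both-odd cases and compares the four block traces $\mathrm{tr}(AA')$, $\mathrm{tr}(BC')$, $\mathrm{tr}(CB')$, $\mathrm{tr}(DD')$ separately, whereas your index-parity function $p$ handles both cases and all blocks in a single computation and shows that equal parity is used exactly to kill the term $s(|X|+|Y|)$.
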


\begin{proof}
Write $X=\begin{pmatrix}A&B\\C&D\end{pmatrix}$, $Y=\begin{pmatrix}A'&B'\\C'&D'\end{pmatrix}$. Then
\[
XY=\begin{pmatrix}AA'+BC'&*\\ *&CB'+DD'\end{pmatrix},\qquad \mathrm{str}(XY)=\mathrm{tr}(AA')+\mathrm{tr}(BC')-\mathrm{tr}(CB')-\mathrm{tr}(DD').
\]

\emph{Case $X,Y$ both even.} Then $A,D,A',D'$ have entries in $R_{\bar0}$, and since $R_{\bar0}$ is genuinely commutative, $\mathrm{tr}(AA')=\mathrm{tr}(A'A)$ and $\mathrm{tr}(DD')=\mathrm{tr}(D'D)$ entrywise. For the cross terms, $B,C'$ have entries in $R_{\bar1}$, so entrywise $B_{ik}C'_{ki}=-C'_{ki}B_{ik}$; summing, $\mathrm{tr}(BC')=-\mathrm{tr}(C'B)$, and likewise $\mathrm{tr}(B'C)=-\mathrm{tr}(CB')$. Using $YX=\begin{pmatrix}A'A+B'C&*\\ *&C'B+D'D\end{pmatrix}$,
\[
\mathrm{str}(YX)=\mathrm{tr}(A'A)+\mathrm{tr}(B'C)-\mathrm{tr}(C'B)-\mathrm{tr}(D'D)=\mathrm{tr}(AA')-\mathrm{tr}(CB')-\mathrm{tr}(C'B)-\mathrm{tr}(DD').
\]
Comparing with $\mathrm{str}(XY)=\mathrm{tr}(AA')+\mathrm{tr}(BC')-\mathrm{tr}(CB')-\mathrm{tr}(DD')$ and using $\mathrm{tr}(BC')=-\mathrm{tr}(C'B)$, both expressions equal $\mathrm{tr}(AA')-\mathrm{tr}(CB')-\mathrm{tr}(C'B)-\mathrm{tr}(DD')$, so $\mathrm{str}(XY)=\mathrm{str}(YX)$, matching $(-1)^{|X||Y|}=1$.

\emph{Case $X,Y$ both odd.} Now $A,D,A',D'$ have entries in $R_{\bar1}$ and $B,C,B',C'$ have entries in $R_{\bar0}$. Entrywise $A_{ik}A'_{ki}=-A'_{ki}A_{ik}$, so $\mathrm{tr}(AA')=-\mathrm{tr}(A'A)$, likewise $\mathrm{tr}(DD')=-\mathrm{tr}(D'D)$; while $B_{ik}C'_{ki}=C'_{ki}B_{ik}$, so $\mathrm{tr}(BC')=\mathrm{tr}(C'B)$, and similarly $\mathrm{tr}(B'C)=\mathrm{tr}(CB')$. Then
\[
\mathrm{str}(XY)=\mathrm{tr}(AA')+\mathrm{tr}(BC')-\mathrm{tr}(CB')-\mathrm{tr}(DD')=-\mathrm{tr}(A'A)+\mathrm{tr}(C'B)-\mathrm{tr}(B'C)+\mathrm{tr}(D'D),
\]
while $\mathrm{str}(YX)=\mathrm{tr}(A'A)+\mathrm{tr}(B'C)-\mathrm{tr}(C'B)-\mathrm{tr}(D'D)$, so $\mathrm{str}(XY)=-\mathrm{str}(YX)$, matching $(-1)^{1\cdot1}=-1$.

We restrict the statement to equal-parity pairs because that is the only case invoked anywhere in this note (Theorem~\ref{thm:strzero} below applies Theorem~\ref{thm:cyc} exclusively to two even arguments); the mixed-parity case $|X|\neq|Y|$ requires additional sign bookkeeping across blocks of different parity and is genuinely not needed for any result here, so we omit it rather than carry an unused generality.
\end{proof}

\begin{Theorem}\label{thm:strzero}
For all even $A,B\in M_{m|n}(R)$ of arbitrary block size $m,n$, $\mathrm{str}([A,B])=0$.
\end{Theorem}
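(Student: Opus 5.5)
The plan is to derive the statement directly from Theorem~\ref{thm:cyc} together with linearity of the supertrace. For even $A,B\in M_{m|n}(R)$ the supercommutator is $[A,B]=AB-(-1)^{|A||B|}BA=AB-BA$, since $|A|=|B|=0$. First I will record that $\mathrm{str}$ is additive and $K$-linear, which is immediate from Definition~\ref{def:supermatrix}: $\mathrm{str}(X)=\mathrm{tr}(A)-\mathrm{tr}(D)$, and the ordinary trace of a block is additive over $R$. This gives
\[
\mathrm{str}([A,B])=\mathrm{str}(AB)-\mathrm{str}(BA).
\]

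Next I will apply Theorem~\ref{thm:cyc} with $X=A$ and $Y=B$. Both are even, so they form an equal-parity pair and the sign is $(-1)^{0\cdot 0}=1$. Hence $\mathrm{str}(AB)=\mathrm{str}(BA)$, and the difference above vanishes. The argument uses nothing about $m,n$ beyond what Theorem~\ref{thm:cyc} already covers, so it holds for arbitrary block size.

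I do not expect a genuine obstacle; this is a corollary. The one point to state carefully is the convention for $[A,B]$. If the bracket is read as the plain commutator $AB-BA$, or as the graded one, the two agree here because both arguments are even, so no separate case is needed. I will also note that the products $AB$ and $BA$ need not be checked for evenness, because Theorem~\ref{thm:cyc} is stated for the factors $X,Y$, not for the products. Whether the notation $A,B$ for the matrices clashes with the block names in Definition~\ref{def:supermatrix} is purely cosmetic, and I will rename them $P,Q$ in the write-up if that helps clarity.
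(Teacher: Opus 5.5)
Your proposal is correct and matches the paper's proof. You apply Theorem~\ref{thm:cyc} to the two even factors to get $\mathrm{str}(AB)=\mathrm{str}(BA)$, then use additivity of $\mathrm{str}$ to conclude $\mathrm{str}([A,B])=\mathrm{str}(AB)-\mathrm{str}(BA)=0$; your remarks on the bracket convention and on not needing the parity of the products are accurate side notes.
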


\begin{proof}
By Theorem~\ref{thm:cyc} with $X=A$, $Y=B$ both even, $\mathrm{str}(AB)=\mathrm{str}(BA)$, so $\mathrm{str}([A,B])=\mathrm{str}(AB)-\mathrm{str}(BA)=0$.
\end{proof}

\section{The Berezinian correction term: the $1|1$ case}\label{sec:11case}

We first recall, as the motivating example for the general theorems of Sections~\ref{sec:general-ber} and~\ref{sec:general-mult}, the computation for $m=n=1$.

\begin{Definition}\label{def:ber11}
For even, invertible $X=\begin{pmatrix}a&\beta\\ \gamma&d\end{pmatrix}\in M_{1|1}(R)$ with $a,d\in R_{\bar0}$ invertible and $\beta,\gamma\in R_{\bar1}$,
\[
\mathrm{Ber}(X)=(a-\beta d^{-1}\gamma)\,d^{-1}.
\]
\end{Definition}

\begin{Proposition}\label{prop:11}
Let $a,d\in K^\times$ and take the odd off-diagonal entries with independent components on both generators, $\beta=p_1\theta_1+p_2\theta_2$, $\gamma=q_1\theta_1+q_2\theta_2$, with $p_1,p_2,q_1,q_2\in K$. Then $\beta\gamma=(p_1q_2-p_2q_1)\theta_1\theta_2$, nonzero whenever $p_1q_2\neq p_2q_1$, and
\[
\mathrm{Ber}(X)=\frac{a}{d}-\frac{p_1q_2-p_2q_1}{d^2}\,\theta_1\theta_2.
\]
\end{Proposition}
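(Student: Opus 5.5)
The plan is a direct computation from Definition~\ref{def:ber11}, using only the multiplication rules of $R$ from Section~2. First I will compute $\beta\gamma$ by bilinear expansion:
\[
\beta\gamma=(p_1\theta_1+p_2\theta_2)(q_1\theta_1+q_2\theta_2)=p_1q_1\theta_1^2+p_1q_2\theta_1\theta_2+p_2q_1\theta_2\theta_1+p_2q_2\theta_2^2 .
\]
The scalars $p_i,q_j\in K$ are even and central, so they pass freely through the $\theta$'s. Then $\theta_i^2=0$ and $\theta_2\theta_1=-\theta_1\theta_2$ collapse this to $(p_1q_2-p_2q_1)\theta_1\theta_2$. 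Since $1,\theta_1,\theta_2,\theta_1\theta_2$ is a $K$-basis of $R$, the product is nonzero exactly when $p_1q_2\neq p_2q_1$.

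Next I will simplify the correction term $\beta d^{-1}\gamma$. Because $d\in K^\times$, its inverse $d^{-1}$ is a scalar in $K$, hence central. This gives $\beta d^{-1}\gamma=d^{-1}\beta\gamma=\frac{p_1q_2-p_2q_1}{d}\theta_1\theta_2$.

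Substituting into Definition~\ref{def:ber11} and distributing $d^{-1}$ then yields
\[
\mathrm{Ber}(X)=\Bigl(a-\tfrac{p_1q_2-p_2q_1}{d}\theta_1\theta_2\Bigr)d^{-1}=\frac{a}{d}-\frac{p_1q_2-p_2q_1}{d^2}\theta_1\theta_2 .
\]

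The only step needing care is the sign from reordering $\theta_2\theta_1$. A secondary point is checking the hypotheses of Definition~\ref{def:ber11}: $X$ is even, since $a,d\in K\subset R_{\bar0}$ and $\beta,\gamma\in R_{\bar1}$, and $a,d$ are invertible in $R$. Otherwise the argument is routine bookkeeping, and there is no real obstacle.
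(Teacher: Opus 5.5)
Your proposal is correct and follows essentially the same route as the paper. Both expand $\beta\gamma$ bilinearly, collapse it using $\theta_i^2=0$ and $\theta_2\theta_1=-\theta_1\theta_2$, pull the scalar $d^{-1}\in K$ out of $\beta d^{-1}\gamma$, and substitute into Definition~\ref{def:ber11}; your basis argument for nonvanishing and your check of the definition's hypotheses are small points the paper leaves implicit.
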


\begin{proof}
$\beta\gamma=(p_1\theta_1+p_2\theta_2)(q_1\theta_1+q_2\theta_2)=p_1q_1\theta_1^2+p_1q_2\theta_1\theta_2+p_2q_1\theta_2\theta_1+p_2q_2\theta_2^2=(p_1q_2-p_2q_1)\theta_1\theta_2$, using $\theta_1^2=\theta_2^2=0$ and $\theta_2\theta_1=-\theta_1\theta_2$. Since $d\in K^\times$, $\beta d^{-1}\gamma=d^{-1}\beta\gamma=d^{-1}(p_1q_2-p_2q_1)\theta_1\theta_2$, and $\mathrm{Ber}(X)=(a-\beta d^{-1}\gamma)d^{-1}=a/d-\big[(p_1q_2-p_2q_1)/d^2\big]\theta_1\theta_2$.
\end{proof}

Write $\Delta:=p_1q_2-p_2q_1$ for this scalar pairing. Theorem~\ref{thm:11mult} below records the multiplicativity check for this case; it is subsumed by Theorem~\ref{thm:general-mult}, of which it is the $m=n=1$ instance, but we keep the direct statement for comparison.

\begin{Theorem}\label{thm:11mult}
Let $X,Y\in M_{1|1}(R)$ be even invertible with odd off-diagonal data $(p_1,p_2,q_1,q_2)$ and $(p_1',p_2',q_1',q_2')$ as in Proposition~\ref{prop:11}, and write $\Delta=p_1q_2-p_2q_1$, $\Delta'=p_1'q_2'-p_2'q_1'$. Then $\mathrm{Ber}(XY)=\mathrm{Ber}(X)\mathrm{Ber}(Y)$, and both sides have a nonzero $\theta_1\theta_2$-coefficient in general; explicitly,
\[
\mathrm{Ber}(XY)=\frac{aa'}{dd'}-\left(\frac{a}{d}\cdot\frac{\Delta'}{d'^2}+\frac{a'}{d'}\cdot\frac{\Delta}{d^2}\right)\theta_1\theta_2.
\]
\end{Theorem}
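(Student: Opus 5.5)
The plan is a direct computation. I expand $XY$ blockwise, evaluate $\mathrm{Ber}(XY)$ by Definition~\ref{def:ber11}, and truncate everything at order $\theta_1\theta_2$. I then compare with the product of two copies of Proposition~\ref{prop:11}.

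Three facts about $R$ drive the truncation:
\begin{itemize}
\item Any product of three or more odd elements vanishes, since it lies in $R_{\bar1}\cdot\theta_1\theta_2=0$.
\item $\theta_1\theta_2$ is central with $(\theta_1\theta_2)^2=0$.
\item Any element $u+\nu\,\theta_1\theta_2$ with $u\in K^\times$ is invertible, with inverse $u^{-1}-u^{-2}\nu\,\theta_1\theta_2$.
\end{itemize}

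\textbf{Step 1: the product matrix.} Write $X=\begin{pmatrix}a&\beta\\ \gamma&d\end{pmatrix}$ and $Y=\begin{pmatrix}a'&\beta'\\ \gamma'&d'\end{pmatrix}$, with $a,d,a',d'\in K^\times$. Then
\[
XY=\begin{pmatrix}aa'+\beta\gamma' & a\beta'+\beta d'\\ \gamma a'+d\gamma' & \gamma\beta'+dd'\end{pmatrix}.
\]
Its diagonal entries are $aa'$ and $dd'$ plus multiples of $\theta_1\theta_2$, so they are invertible. Hence $XY$ is even and invertible, and Definition~\ref{def:ber11} applies; here the diagonal entries lie in $R_{\bar0}$ rather than in $K$.

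\textbf{Step 2: the Schur complement.} Set $\mathcal{A}=aa'+\beta\gamma'$, $\mathcal{B}=a\beta'+\beta d'$, $\mathcal{C}=\gamma a'+d\gamma'$ and $\mathcal{D}=dd'+\gamma\beta'$. The product $\mathcal{B}\mathcal{C}$ is already of order $\theta_1\theta_2$, so the correction in $\mathcal{D}^{-1}$ drops out and $\mathcal{B}\mathcal{D}^{-1}\mathcal{C}=(dd')^{-1}\mathcal{B}\mathcal{C}$. Expanding,
\[
\mathcal{B}\mathcal{C}=aa'\beta'\gamma+ad\,\beta'\gamma'+a'd'\,\beta\gamma+dd'\,\beta\gamma'.
\]
The last term, divided by $dd'$, cancels the $\beta\gamma'$ inside $\mathcal{A}$. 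This leaves
\[
\mathcal{A}-\mathcal{B}\mathcal{D}^{-1}\mathcal{C}=aa'-\frac{aa'}{dd'}\beta'\gamma-\frac{a}{d'}\beta'\gamma'-\frac{a'}{d}\beta\gamma.
\]

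\textbf{Step 3: the second cancellation (main obstacle).} Now multiply by $\mathcal{D}^{-1}=(dd')^{-1}\big(1-(dd')^{-1}\gamma\beta'\big)$. This produces two stray terms of order $\theta_1\theta_2$:
\[
-\frac{aa'}{(dd')^2}\,\gamma\beta'\qquad\text{and}\qquad -\frac{aa'}{(dd')^2}\,\beta'\gamma.
\]
They cancel because $\gamma\beta'=-\beta'\gamma$ for odd elements. This is the $1|1$ shadow of the trace-cyclicity identity (Theorem~\ref{thm:cyc}), and tracking its sign is the only delicate point.

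\textbf{Step 4: read off the coefficients.} After the cancellation, Proposition~\ref{prop:11} gives $\beta\gamma=\Delta\,\theta_1\theta_2$ and $\beta'\gamma'=\Delta'\,\theta_1\theta_2$. Substituting yields
\[
\mathrm{Ber}(XY)=\frac{aa'}{dd'}-\Big(\frac{a}{d}\frac{\Delta'}{d'^2}+\frac{a'}{d'}\frac{\Delta}{d^2}\Big)\theta_1\theta_2,
\]
which is the displayed formula in the statement.

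\textbf{Step 5: compare with the product.} Multiply the two instances of Proposition~\ref{prop:11}:
\[
\Big(\frac{a}{d}-\frac{\Delta}{d^2}\theta_1\theta_2\Big)\Big(\frac{a'}{d'}-\frac{\Delta'}{d'^2}\theta_1\theta_2\Big).
\]
Using $(\theta_1\theta_2)^2=0$, this equals the same expression, which proves $\mathrm{Ber}(XY)=\mathrm{Ber}(X)\mathrm{Ber}(Y)$.

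\textbf{Step 6: nonvanishing in general.} Take $\Delta=1$ (for instance $p_1=q_2=1$, $p_2=q_1=0$) and $\Delta'=0$. The $\theta_1\theta_2$-coefficient is then $-a'/(d'd^2)\neq0$, so both sides have a nonzero $\theta_1\theta_2$-coefficient.
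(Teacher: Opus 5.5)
Your computation is correct. The $\beta\gamma'$ cancellation in Step~2 and the cancellation $-\frac{aa'}{(dd')^2}(\gamma\beta'+\beta'\gamma)=0$ in Step~3 both check out. The surviving terms give exactly the displayed formula and match the product of two instances of Proposition~\ref{prop:11}.

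The paper does not argue this by hand. It obtains the statement as the $m=n=1$ case of Theorem~\ref{thm:general-mult} (Corollary~\ref{cor:recover11mult}). It first writes every Berezinian in the normalized form $\frac{\det A_0}{\det D_0}[1+f\,\theta_1\theta_2]$ via Lemma~\ref{lem:sqzero} and Theorem~\ref{thm:general-ber}. It then proves $f(XY)=f(X)+f(Y)$ through two cancellations: the pair of $\mathrm{tr}((A_0A_0')^{-1}\Gamma)$ terms, and the identity $(\ast)$, $\mathrm{tr}((D_0D_0')^{-1}\Lambda)=-\mathrm{tr}(\Theta)$.

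Your two cancellations are exactly the scalar instances of these, since $\beta\gamma'=\Gamma\,\theta_1\theta_2$, $\gamma\beta'=\Lambda\,\theta_1\theta_2$ and $\beta'\gamma=dd'\,\Theta\,\theta_1\theta_2$. Your route is shorter and self-contained, needing neither the general formula nor the normalization by $f$. The paper's route buys uniformity in $m,n$, where the $K$-matrix coefficients no longer commute and the cancellation has to be organized by ordinary trace cyclicity.

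One small misattribution: your Step~3 cancellation is not an instance of Theorem~\ref{thm:cyc} (graded cyclicity of the supertrace). It is simply the anticommutation $\gamma\beta'=-\beta'\gamma$ in $R$, which is what $(\ast)$ collapses to at $m=n=1$.
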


This is Theorem~5.1 of the $1|1$ note; it is reproved as the special case $m=n=1$ of Theorem~\ref{thm:general-mult} below (Corollary~\ref{cor:recover11}), so we omit the by-hand proof here and refer to that corollary.

\section{A general Berezinian formula for arbitrary block size}\label{sec:general-ber}

We now drop the restriction $m=n=1$. Throughout, $A_0,A_2\in M_{m\times m}(K)$, $D_0,D_2\in M_{n\times n}(K)$, $B_1,B_2\in M_{m\times n}(K)$, $C_1,C_2\in M_{n\times m}(K)$ denote the coefficients of $1,\theta_1\theta_2$ in the even blocks and of $\theta_1,\theta_2$ in the odd blocks of a general even $X\in M_{m|n}(R)$:
\[
A=A_0+A_2\,\theta_1\theta_2,\qquad D=D_0+D_2\,\theta_1\theta_2,\qquad B=B_1\theta_1+B_2\theta_2,\qquad C=C_1\theta_1+C_2\theta_2.
\]
This is the general form of an even block, since $R_{\bar0}=K\oplus K\theta_1\theta_2$ and $R_{\bar1}=K\theta_1\oplus K\theta_2$ exactly, with no room for higher-order terms. (Table~\ref{tab:notation} at the end of Section~\ref{sec:general-mult} collects this notation for reference.)

\begin{Lemma}[Square-zero perturbation lemma]\label{lem:sqzero}
Let $N\in \mathrm{GL}_k(K)$ and $M\in M_{k\times k}(K)$, and let $\varepsilon\in R_{\bar0}$ be central with $\varepsilon^2=0$ (e.g.\ $\varepsilon=\theta_1\theta_2$). Then, over $R$,
\[
(N+M\varepsilon)^{-1}=N^{-1}-N^{-1}MN^{-1}\varepsilon,\qquad \det(N+M\varepsilon)=\det(N)\big(1+\mathrm{tr}(N^{-1}M)\varepsilon\big).
\]
\end{Lemma}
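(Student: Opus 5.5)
For the inverse formula, I would verify it by direct multiplication, using that $\varepsilon$ is central and satisfies $\varepsilon^2=0$. Since $\varepsilon$ is even and central, it commutes with every entry of $N$, $M$ and $N^{-1}$. These entries lie in $K$, so scalar factors of $\varepsilon$ can be moved freely through matrix products. Expanding gives
\[
(N+M\varepsilon)\big(N^{-1}-N^{-1}MN^{-1}\varepsilon\big)=I+MN^{-1}\varepsilon-MN^{-1}\varepsilon-MN^{-1}MN^{-1}\varepsilon^2=I.
\]
The same computation with the factors in the other order gives the left inverse. So $N+M\varepsilon$ is invertible over $R$ with the stated inverse.

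For the determinant, I would first factor $N+M\varepsilon=N(I+N^{-1}M\varepsilon)$. The determinant is multiplicative for matrices over the commutative ring $R_{\bar0}$; all entries here lie in $R_{\bar0}=K\oplus K\varepsilon$, which the paper notes is genuinely commutative. This reduces the claim to $\det(I+P\varepsilon)=1+\mathrm{tr}(P)\varepsilon$ with $P=N^{-1}M\in M_{k\times k}(K)$. I would prove this by the Leibniz expansion:
\[
\det(I+P\varepsilon)=\sum_{\sigma}\mathrm{sgn}(\sigma)\prod_i\big(\delta_{i\sigma(i)}+P_{i\sigma(i)}\varepsilon\big).
\]
Any product containing two or more factors of $\varepsilon$ vanishes because $\varepsilon^2=0$. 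A term with exactly one $\varepsilon$-factor needs $\delta_{i\sigma(i)}=1$ for all other $i$. That forces $\sigma$ to fix $k-1$ points, hence $\sigma=\mathrm{id}$. So the only surviving terms are $1$ from the identity permutation and $\sum_i P_{ii}\varepsilon=\mathrm{tr}(P)\varepsilon$. This gives $\det(N+M\varepsilon)=\det(N)\big(1+\mathrm{tr}(N^{-1}M)\varepsilon\big)$.

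The only point needing care is justifying the use of determinant multiplicativity over $R$. $R$ itself is only supercommutative, so I would stress that every matrix involved has entries in the commutative subring $K[\varepsilon]\subset R_{\bar0}$, where ordinary commutative-algebra facts apply. For the example $\varepsilon=\theta_1\theta_2$, centrality and $\varepsilon^2=0$ were already checked in the section on the ground ring. Beyond that, the argument is a routine nilpotent first-order expansion, and I expect no real obstacle.
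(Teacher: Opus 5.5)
Your proposal is correct and follows essentially the same route as the paper: you verify the inverse by direct multiplication using centrality and $\varepsilon^2=0$, then factor $N+M\varepsilon=N(I+N^{-1}M\varepsilon)$ and read off the first-order term of $\det(I+P\varepsilon)$. Your Leibniz-expansion argument, the two-sided inverse check, and the remark that all entries lie in the commutative subring $K[\varepsilon]$ only make explicit what the paper compresses into an appeal to the standard cofactor expansion.
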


\begin{proof}
For the first identity, multiply out: $(N+M\varepsilon)(N^{-1}-N^{-1}MN^{-1}\varepsilon)=I-MN^{-1}\varepsilon+MN^{-1}\varepsilon-MN^{-1}MN^{-1}\varepsilon^2=I$, using $\varepsilon^2=0$ and centrality of $\varepsilon$ to commute it freely past matrix factors. For the second, $\det(N+M\varepsilon)=\det(N)\det(I+N^{-1}M\varepsilon)$, and $\det(I+N^{-1}M\varepsilon)$ is, entry by entry, a polynomial in $\varepsilon$ of degree at most $k$; since $\varepsilon^2=0$, only the constant and linear terms survive, and the linear coefficient of $\det(I+\varepsilon L)$ in $\varepsilon$ is $\mathrm{tr}(L)$ by the standard cofactor expansion of the determinant along the diagonal. Hence $\det(I+N^{-1}M\varepsilon)=1+\mathrm{tr}(N^{-1}M)\varepsilon$.
\end{proof}

\begin{Theorem}[General Berezinian formula]\label{thm:general-ber}
Let $X=\begin{pmatrix}A&B\\C&D\end{pmatrix}\in M_{m|n}(R)$ be even, with $A_0,D_0$ invertible over $K$ (equivalently, $A,D$ invertible over $R$), $m,n\geq1$ arbitrary. Set
\[
\Delta:=B_1D_0^{-1}C_2-B_2D_0^{-1}C_1\ \in M_{m\times m}(K).
\]
Then
\[
\mathrm{Ber}(X)=\frac{\det A_0}{\det D_0}\Big[1+f(X)\,\theta_1\theta_2\Big],\qquad f(X):=\mathrm{tr}(A_0^{-1}A_2)-\mathrm{tr}(A_0^{-1}\Delta)-\mathrm{tr}(D_0^{-1}D_2).
\]
\end{Theorem}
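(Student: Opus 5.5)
We start from the Schur-complement definition $\mathrm{Ber}(X)=\det(A-BD^{-1}C)\det(D)^{-1}$, the $m|n$ analogue of Definition~\ref{def:ber11}, and expand each factor using the square-zero structure of $R$. Every correction will be proportional to the central, square-zero element $\varepsilon=\theta_1\theta_2$, so Lemma~\ref{lem:sqzero} handles all inversions and determinants.

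\emph{Step 1: invert $D$ and compute $\det D$.} Since $D=D_0+D_2\varepsilon$, Lemma~\ref{lem:sqzero} gives
\[
D^{-1}=D_0^{-1}-D_0^{-1}D_2D_0^{-1}\varepsilon,\qquad
\det(D)^{-1}=\det(D_0)^{-1}\big(1-\mathrm{tr}(D_0^{-1}D_2)\varepsilon\big).
\]
The second formula uses $(1+x\varepsilon)^{-1}=1-x\varepsilon$.

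\emph{Step 2: compute $BD^{-1}C$.} Each entry of $B$ and $C$ is odd, so any term containing $\varepsilon$ times an odd factor times another odd factor is cubic in the generators and vanishes. Hence $BD^{-1}C=BD_0^{-1}C$. Expanding with $B=B_1\theta_1+B_2\theta_2$ and $C=C_1\theta_1+C_2\theta_2$, the scalar matrices $B_i,D_0^{-1},C_j$ commute past the $\theta$'s. This gives
\[
\sum_{i,j}B_iD_0^{-1}C_j\,\theta_i\theta_j
=(B_1D_0^{-1}C_2-B_2D_0^{-1}C_1)\,\theta_1\theta_2=\Delta\,\varepsilon,
\]
using $\theta_i^2=0$ and $\theta_2\theta_1=-\theta_1\theta_2$. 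This is the step needing the most care: one has to check that moving $\theta$'s past $K$-valued matrices introduces no sign, and that the order $\theta_i\theta_j$ in each product is the one fixed by the row-times-column multiplication. The case $m=n=1$ must reproduce $\beta\gamma=(p_1q_2-p_2q_1)\theta_1\theta_2$ from Proposition~\ref{prop:11}, and it does.

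\emph{Step 3: assemble.} We have $A-BD^{-1}C=A_0+(A_2-\Delta)\varepsilon$, so Lemma~\ref{lem:sqzero} with $N=A_0$ and $M=A_2-\Delta$ gives
\[
\det(A-BD^{-1}C)=\det A_0\,\big(1+\mathrm{tr}(A_0^{-1}A_2)\varepsilon-\mathrm{tr}(A_0^{-1}\Delta)\varepsilon\big).
\]
Multiply this by the expression for $\det(D)^{-1}$ from Step 1 and drop the $\varepsilon^2$ term. The result is $\frac{\det A_0}{\det D_0}[1+f(X)\varepsilon]$, which is the claimed formula.

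For the parenthetical equivalence in the hypothesis, $A$ invertible over $R$ iff $A_0$ is invertible: one direction is Lemma~\ref{lem:sqzero}, and the other follows by reducing modulo the nilpotent ideal. The same argument applies to $D$.
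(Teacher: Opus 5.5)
Your proposal is correct and follows the paper's proof essentially step for step. Like the paper, you invert $D$ via Lemma~\ref{lem:sqzero}, discard the $\varepsilon$-correction in $BD^{-1}C$ for degree reasons, reduce $BD_0^{-1}C$ to $\Delta\,\theta_1\theta_2$, and apply the determinant half of the lemma to the Schur complement and to $D$. Your argument for the parenthetical invertibility equivalence, by reducing modulo the nilpotent ideal, is a small addition that the paper leaves implicit.
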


\begin{proof}
By Lemma~\ref{lem:sqzero} with $N=D_0$, $M=D_2$, $\varepsilon=\theta_1\theta_2$: $D^{-1}=D_0^{-1}-D_0^{-1}D_2D_0^{-1}\theta_1\theta_2$.

Since $B,C\in R_{\bar1}$ are themselves already homogeneous of degree $1$ (in the sense that they are $K$-linear combinations of $\theta_1,\theta_2$ only, with no $\theta_1\theta_2$ component possible), and $R$ has top degree $2$, the term $D_0^{-1}D_2D_0^{-1}\theta_1\theta_2$ contributes to $BD^{-1}C$ only through $B\,(\cdot)\,C$, which is already forced to vanish for degree reasons (it would carry a $\theta_1\theta_2$ from $D^{-1}$ together with the $\theta_1,\theta_2$ already present in $B$ and $C$, exceeding the top degree $2$ of $R$ once $B$ and $C$ are inserted). Hence only the leading term of $D^{-1}$ contributes:
\[
BD^{-1}C=BD_0^{-1}C=(B_1\theta_1+B_2\theta_2)D_0^{-1}(C_1\theta_1+C_2\theta_2)=\big(B_1D_0^{-1}C_2-B_2D_0^{-1}C_1\big)\theta_1\theta_2=\Delta\,\theta_1\theta_2,
\]
using $\theta_1^2=\theta_2^2=0$, $\theta_2\theta_1=-\theta_1\theta_2$, exactly as in the proof of Proposition~\ref{prop:11}. Thus
\[
A-BD^{-1}C=A_0+A_2\theta_1\theta_2-\Delta\theta_1\theta_2=A_0+(A_2-\Delta)\theta_1\theta_2.
\]
By Lemma~\ref{lem:sqzero} (second identity) applied twice, once to $A_0+(A_2-\Delta)\theta_1\theta_2$ and once to $D_0+D_2\theta_1\theta_2$:
\[
\det(A-BD^{-1}C)=\det A_0\big[1+\mathrm{tr}(A_0^{-1}(A_2-\Delta))\theta_1\theta_2\big],\qquad \det D=\det D_0\big[1+\mathrm{tr}(D_0^{-1}D_2)\theta_1\theta_2\big].
\]
Since $1+x\varepsilon$ and $1+y\varepsilon$ are inverse to $1-x\varepsilon$, $1-y\varepsilon$ respectively when $\varepsilon^2=0$,
\[
\det(D)^{-1}=(\det D_0)^{-1}\big[1-\mathrm{tr}(D_0^{-1}D_2)\theta_1\theta_2\big],
\]
so
\[
\mathrm{Ber}(X)=\det(A-BD^{-1}C)\det(D)^{-1}=\frac{\det A_0}{\det D_0}\Big[1+\mathrm{tr}(A_0^{-1}(A_2-\Delta))\theta_1\theta_2\Big]\Big[1-\mathrm{tr}(D_0^{-1}D_2)\theta_1\theta_2\Big].
\]
Expanding and discarding the $\theta_1\theta_2\cdot\theta_1\theta_2=0$ cross term gives $\mathrm{Ber}(X)=\det(A_0)/\det(D_0)\cdot[1+f(X)\theta_1\theta_2]$ with $f(X)$ as stated.
\end{proof}

\begin{Corollary}[Recovery of the $1|1$ formula]\label{cor:recover11}
For $m=n=1$, $A_0=a$, $D_0=d$, $A_2=D_2=0$, $B_1=p_1$, $B_2=p_2$, $C_1=q_1$, $C_2=q_2$, Theorem~\ref{thm:general-ber} gives $\Delta=p_1q_2/d-p_2q_1/d=(p_1q_2-p_2q_1)/d$, $f(X)=-a^{-1}\Delta=-(p_1q_2-p_2q_1)/(ad)$, and
\[
\mathrm{Ber}(X)=\frac{a}{d}\Big[1-\frac{p_1q_2-p_2q_1}{ad}\theta_1\theta_2\Big]=\frac{a}{d}-\frac{p_1q_2-p_2q_1}{d^2}\theta_1\theta_2,
\]
which is exactly Proposition~\ref{prop:11}.
\end{Corollary}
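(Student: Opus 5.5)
The plan is to specialize Theorem~\ref{thm:general-ber} directly to $m=n=1$ and check that the resulting expression matches Proposition~\ref{prop:11}. With $m=n=1$, every block is a $1\times 1$ matrix, so traces and determinants are just the entries themselves and matrix products are scalar products in $K$. I would substitute $A_0=a$, $A_2=0$, $D_0=d$, $D_2=0$, $B_1=p_1$, $B_2=p_2$, $C_1=q_1$, $C_2=q_2$. These are exactly the coefficient data of the matrix in Proposition~\ref{prop:11}, since there $a,d\in K^\times$ have no $\theta_1\theta_2$ component.

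First I would compute the pairing. By definition $\Delta=B_1D_0^{-1}C_2-B_2D_0^{-1}C_1$, which here becomes $p_1d^{-1}q_2-p_2d^{-1}q_1=(p_1q_2-p_2q_1)/d$. The three terms of $f(X)$ are then evaluated separately:
\begin{itemize}
\item $\mathrm{tr}(A_0^{-1}A_2)=0$ and $\mathrm{tr}(D_0^{-1}D_2)=0$, because $A_2=D_2=0$;
\item $\mathrm{tr}(A_0^{-1}\Delta)=a^{-1}\Delta=(p_1q_2-p_2q_1)/(ad)$.
\end{itemize}
This gives $f(X)=-(p_1q_2-p_2q_1)/(ad)$.

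Next I would insert this into the formula of Theorem~\ref{thm:general-ber}:
\[
\mathrm{Ber}(X)=\frac{a}{d}\Big[1+f(X)\theta_1\theta_2\Big]=\frac{a}{d}-\frac{a}{d}\cdot\frac{p_1q_2-p_2q_1}{ad}\,\theta_1\theta_2 .
\]
Cancelling the factor $a$ gives $a/d-(p_1q_2-p_2q_1)d^{-2}\theta_1\theta_2$, which is exactly the expression in Proposition~\ref{prop:11}.

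There is no real obstacle here. The only points to check are that the hypotheses of Theorem~\ref{thm:general-ber} hold, namely that $A_0=a$ and $D_0=d$ are invertible, which follows from $a,d\in K^\times$. The cancellation of $a$ is legitimate for the same reason.
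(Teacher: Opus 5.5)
Your proposal is correct and follows essentially the same route as the paper: substitute the $1|1$ data into Theorem~\ref{thm:general-ber}, compute $\Delta=(p_1q_2-p_2q_1)/d$ and $f(X)=-a^{-1}\Delta$, then simplify $\frac{a}{d}\big[1+f(X)\theta_1\theta_2\big]$ to the expression of Proposition~\ref{prop:11}. Your observation that $a,d\in K^\times$ both gives the invertibility hypotheses and forces $A_2=D_2=0$ is the only check the argument needs.
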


\section{Multiplicativity for arbitrary block size}\label{sec:general-mult}

\begin{Theorem}[General multiplicativity]\label{thm:general-mult}
Let $X=\begin{pmatrix}A&B\\C&D\end{pmatrix}$, $Y=\begin{pmatrix}A'&B'\\C'&D'\end{pmatrix}\in M_{m|n}(R)$ be even and invertible, of arbitrary block size $m,n\geq1$, with data $A_0,A_2,D_0,D_2,B_1,B_2,C_1,C_2$ and $A_0',A_2',D_0',D_2',B_1',B_2',C_1',C_2'$ as in Section~\ref{sec:general-ber}. Then
\[
\mathrm{Ber}(XY)=\mathrm{Ber}(X)\,\mathrm{Ber}(Y),
\]
and, writing $f(X),f(Y)$ as in Theorem~\ref{thm:general-ber}, both sides equal $\dfrac{\det(A_0A_0')}{\det(D_0D_0')}\big[1+(f(X)+f(Y))\theta_1\theta_2\big]$.
\end{Theorem}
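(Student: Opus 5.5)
Write $\varepsilon=\theta_1\theta_2$ throughout. The plan is to compute the data of $Z=XY$ in the notation of Section~\ref{sec:general-ber}, apply Theorem~\ref{thm:general-ber} to $Z$, and check that $f(Z)=f(X)+f(Y)$. Since $\det(A_0A_0')/\det(D_0D_0')$ is obviously the product of the leading factors and $(1+x\varepsilon)(1+y\varepsilon)=1+(x+y)\varepsilon$, this gives both claims at once.

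\textbf{Blocks of $Z$.} Block multiplication gives $A_Z=AA'+BC'$ and $D_Z=CB'+DD'$, with odd blocks $B_Z=AB'+BD'$ and $C_Z=CA'+DC'$. Expanding in the basis of $R$ gives
\[
(A_Z)_0=A_0A_0',\qquad (A_Z)_2=A_0A_2'+A_2A_0'+B_1C_2'-B_2C_1',
\]
\[
(D_Z)_0=D_0D_0',\qquad (D_Z)_2=D_0D_2'+D_2D_0'+C_1B_2'-C_2B_1',
\]
and, to first order,
\[
(B_Z)_i=A_0B_i'+B_iD_0',\qquad (C_Z)_i=C_iA_0'+D_0C_i'.
\]
The $\varepsilon$-parts of $A,D$ do not enter the odd blocks, because they are killed by the top degree of $R$.

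The invertibility hypotheses need a remark. $X$ and $Y$ are invertible, but Theorem~\ref{thm:general-ber} needs $A_0,D_0$ invertible, and I will assume this as that theorem does. Then $(A_Z)_0,(D_Z)_0$ are invertible, so Theorem~\ref{thm:general-ber} applies to $Z$.

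\textbf{Computing $f(Z)$.} Use $\operatorname{tr}(N^{-1}M)$ with $N=A_0A_0'$, and expand each trace by cyclicity over $K$. This is legitimate because all matrices are now over $K$, and Theorem~\ref{thm:cyc} at degree zero suffices. The terms are:
\begin{itemize}
\item $\operatorname{tr}((A_0A_0')^{-1}(A_0A_2'+A_2A_0'))=\operatorname{tr}(A_0'^{-1}A_2')+\operatorname{tr}(A_0^{-1}A_2)$;
\item the $D$-analogue, giving $\operatorname{tr}(D_0'^{-1}D_2')+\operatorname{tr}(D_0^{-1}D_2)$;
\item the mixed even terms $T_A=\operatorname{tr}(A_0'^{-1}A_0^{-1}(B_1C_2'-B_2C_1'))$ and $T_D=\operatorname{tr}(D_0'^{-1}D_0^{-1}(C_1B_2'-C_2B_1'))$;
\item $\operatorname{tr}((A_Z)_0^{-1}\Delta_Z)$, where $\Delta_Z=(B_Z)_1(D_0D_0')^{-1}(C_Z)_2-(B_Z)_2(D_0D_0')^{-1}(C_Z)_1$.
\end{itemize}

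\textbf{The expansion of $\Delta_Z$ (main obstacle).} Expanding $\Delta_Z$ gives four families of terms, one for each choice of factor in $(B_Z)_i$ and in $(C_Z)_j$.
\begin{itemize}
\item The family $B_iD_0'\,D_0'^{-1}D_0^{-1}\,C_jA_0'$, after conjugating by $A_0'$ inside the trace, yields exactly $\operatorname{tr}(A_0^{-1}\Delta_X)$.
\item The family $A_0B_i'\,D_0'^{-1}D_0^{-1}\,D_0C_j'$ yields $\operatorname{tr}(A_0'^{-1}\Delta_Y)$.
\item The cross family $B_iD_0'D_0'^{-1}D_0^{-1}D_0C_j'=B_iC_j'$ yields $\operatorname{tr}(A_0'^{-1}A_0^{-1}(B_1C_2'-B_2C_1'))=T_A$, which cancels the mixed $A$-term.
\item The other cross family $A_0B_i'D_0'^{-1}D_0^{-1}C_jA_0'$ yields $\operatorname{tr}(B_i'D_0'^{-1}D_0^{-1}C_j)$. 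Cycling this to $\operatorname{tr}(D_0'^{-1}D_0^{-1}C_jB_i')$ should match $T_D$, where the sign pattern $C_1B_2'-C_2B_1'$ versus $B_1'C_2-B_2'C_1$ must be tracked carefully.
\end{itemize}
The signs in $f$ are $+A$, $-\Delta$, $-D$. Once these cancellations are checked, all mixed terms drop out and $f(Z)=f(X)+f(Y)$.

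The one delicate point is the last cross family: the index swap ($B'_2$ paired with $C_1$ versus $C_1$ paired with $B_2'$) must produce the correct sign relative to $T_D$. I would verify this first on the $1|1$ case of Theorem~\ref{thm:11mult}, then in general.
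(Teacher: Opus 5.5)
Your proposal is correct and follows the paper's proof essentially step for step. It uses the same block data for $XY$, the same four-family expansion $\Delta_{XY}=A_0\Theta A_0'+A_0\Delta_Y+\Delta_XA_0'+\Gamma$ with $\Theta=B_1'D_0'^{-1}D_0^{-1}C_2-B_2'D_0'^{-1}D_0^{-1}C_1$, and the same two trace-cyclicity cancellations.

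The sign you left open is exactly the paper's identity $(\ast)$, and it does close. We have $\mathrm{tr}(\Theta)=\mathrm{tr}\big(D_0'^{-1}D_0^{-1}(C_2B_1'-C_1B_2')\big)=-T_D$, so the contribution $-\mathrm{tr}(\Theta)$ from the $-\Delta$ term of $f(XY)$ equals $+T_D$ and cancels the $-T_D$ coming from the $D$-block. Also, the invertibility of $A_0,D_0$ that you assume is automatic: reducing $X$ modulo $(\theta_1,\theta_2)$ gives the invertible matrix $\mathrm{diag}(A_0,D_0)$.
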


\begin{proof}
\emph{Step 1: block-multiply $X$ and $Y$.} By definition,
\[
XY=\begin{pmatrix}AA'+BC'&AB'+BD'\\ CA'+DC'&CB'+DD'\end{pmatrix}.
\]
We compute each block's coefficients in the basis $1,\theta_1,\theta_2,\theta_1\theta_2$, discarding any term of total odd-degree exceeding $2$.

\emph{Diagonal blocks.} $AA'=(A_0+A_2\theta_1\theta_2)(A_0'+A_2'\theta_1\theta_2)=A_0A_0'+(A_0A_2'+A_2A_0')\theta_1\theta_2$ (the $A_2\theta_1\theta_2\cdot A_2'\theta_1\theta_2$ term vanishes, $(\theta_1\theta_2)^2=0$). And, exactly as in the proof of Theorem~\ref{thm:general-ber}, $BC'=(B_1C_2'-B_2C_1')\theta_1\theta_2=:\Gamma\,\theta_1\theta_2$. So the $(1,1)$ block is $A_0A_0'+(A_0A_2'+A_2A_0'+\Gamma)\theta_1\theta_2$; write $A_2^{XY}:=A_0A_2'+A_2A_0'+\Gamma$.

Symmetrically, $DD'=D_0D_0'+(D_0D_2'+D_2D_0')\theta_1\theta_2$ and $CB'=(C_1B_2'-C_2B_1')\theta_1\theta_2=:\Lambda\,\theta_1\theta_2$, so the $(2,2)$ block is $D_0D_0'+(D_0D_2'+D_2D_0'+\Lambda)\theta_1\theta_2$; write $D_2^{XY}:=D_0D_2'+D_2D_0'+\Lambda$.

\emph{Off-diagonal blocks.} $AB'+BD'=(A_0+A_2\theta_1\theta_2)(B_1'\theta_1+B_2'\theta_2)+(B_1\theta_1+B_2\theta_2)(D_0'+D_2'\theta_1\theta_2)$. Any product of $A_2\theta_1\theta_2$ (degree $2$) with $B_1'\theta_1$ or $B_2'\theta_2$ (degree $1$) has degree $3>2$ and vanishes; likewise for $B_i\theta_i\cdot D_2'\theta_1\theta_2$. Hence
\[
AB'+BD'=(A_0B_1'+B_1D_0')\theta_1+(A_0B_2'+B_2D_0')\theta_2=:B_1^{XY}\theta_1+B_2^{XY}\theta_2,
\]
independent of $A_2,A_2',D_2,D_2'$. By the same reasoning,
\[
CA'+DC'=(C_1A_0'+D_0C_1')\theta_1+(C_2A_0'+D_0C_2')\theta_2=:C_1^{XY}\theta_1+C_2^{XY}\theta_2,
\]
also independent of $A_2,A_2',D_2,D_2'$.

\emph{Step 2: compute $\Delta_{XY}$.} By definition, $\Delta_{XY}=B_1^{XY}(D_0D_0')^{-1}C_2^{XY}-B_2^{XY}(D_0D_0')^{-1}C_1^{XY}$, with $(D_0D_0')^{-1}=D_0'^{-1}D_0^{-1}$. Substituting $B_i^{XY}=A_0B_i'+B_iD_0'$, $C_i^{XY}=C_iA_0'+D_0C_i'$ and expanding each of the two four-term products, all $D_0'^{-1}D_0'$ and $D_0^{-1}D_0$ pairs cancel to give
\[
\Delta_{XY}=A_0\,\Theta\,A_0'+A_0\Delta_Y+\Delta_XA_0'+\Gamma,
\]
where $\Delta_X=B_1D_0^{-1}C_2-B_2D_0^{-1}C_1$, $\Delta_Y=B_1'D_0'^{-1}C_2'-B_2'D_0'^{-1}C_1'$ are the pairings of $X$ and $Y$ from Theorem~\ref{thm:general-ber}, and
\[
\Theta:=B_1'D_0'^{-1}D_0^{-1}C_2-B_2'D_0'^{-1}D_0^{-1}C_1
\]
is a new pairing mixing the odd data of $Y$'s $B$-blocks with $X$'s $C$-blocks through both inverse bodies.

\emph{Step 3: the key cancellation.} Using cyclicity of the ordinary matrix trace $\mathrm{tr}(PQ)=\mathrm{tr}(QP)$,
\[
\mathrm{tr}(\Theta)=\mathrm{tr}\big(D_0'^{-1}D_0^{-1}C_2B_1'\big)-\mathrm{tr}\big(D_0'^{-1}D_0^{-1}C_1B_2'\big),
\]
while
\[
\mathrm{tr}\big((D_0D_0')^{-1}\Lambda\big)=\mathrm{tr}\big(D_0'^{-1}D_0^{-1}(C_1B_2'-C_2B_1')\big)=\mathrm{tr}\big(D_0'^{-1}D_0^{-1}C_1B_2'\big)-\mathrm{tr}\big(D_0'^{-1}D_0^{-1}C_2B_1'\big).
\]
Comparing term by term,
\[
\mathrm{tr}\big((D_0D_0')^{-1}\Lambda\big)=-\mathrm{tr}(\Theta). \tag{$\ast$}
\]
This identity is the general-$(m|n)$ counterpart of the cancellation of the mixed pairing carried out by hand, in scalar form, in the proof of the $1|1$ multiplicativity theorem; here it holds for arbitrary block size purely by cyclicity of trace, with no further computation needed.

\emph{Step 4: assemble $f(XY)$.} By Theorem~\ref{thm:general-ber} applied to $XY$ (with $A_0(XY)=A_0A_0'$, $D_0(XY)=D_0D_0'$),
\[
f(XY)=\mathrm{tr}\big((A_0A_0')^{-1}A_2^{XY}\big)-\mathrm{tr}\big((A_0A_0')^{-1}\Delta_{XY}\big)-\mathrm{tr}\big((D_0D_0')^{-1}D_2^{XY}\big).
\]
Using $A_2^{XY}=A_0A_2'+A_2A_0'+\Gamma$ and cyclicity as above,
\[
\mathrm{tr}\big((A_0A_0')^{-1}A_2^{XY}\big)=\mathrm{tr}(A_0'^{-1}A_2')+\mathrm{tr}(A_0^{-1}A_2)+\mathrm{tr}\big((A_0A_0')^{-1}\Gamma\big),
\]
and using $D_2^{XY}=D_0D_2'+D_2D_0'+\Lambda$,
\[
\mathrm{tr}\big((D_0D_0')^{-1}D_2^{XY}\big)=\mathrm{tr}(D_0'^{-1}D_2')+\mathrm{tr}(D_0^{-1}D_2)+\mathrm{tr}\big((D_0D_0')^{-1}\Lambda\big).
\]
For the middle term, using $\Delta_{XY}=A_0\Theta A_0'+A_0\Delta_Y+\Delta_XA_0'+\Gamma$ from Step 2 and cyclicity of trace (as in $\mathrm{tr}(A_0'^{-1}\Theta A_0')=\mathrm{tr}(\Theta)$ and $\mathrm{tr}(A_0'^{-1}A_0^{-1}\Delta_XA_0')=\mathrm{tr}(A_0^{-1}\Delta_X)$),
\[
\mathrm{tr}\big((A_0A_0')^{-1}\Delta_{XY}\big)=\mathrm{tr}(\Theta)+\mathrm{tr}(A_0'^{-1}\Delta_Y)+\mathrm{tr}(A_0^{-1}\Delta_X)+\mathrm{tr}\big((A_0A_0')^{-1}\Gamma\big).
\]
Substituting all three into $f(XY)$, the two occurrences of $\mathrm{tr}((A_0A_0')^{-1}\Gamma)$ cancel, leaving
\[
f(XY)=\Big[\mathrm{tr}(A_0^{-1}A_2)-\mathrm{tr}(A_0^{-1}\Delta_X)-\mathrm{tr}(D_0^{-1}D_2)\Big]+\Big[\mathrm{tr}(A_0'^{-1}A_2')-\mathrm{tr}(A_0'^{-1}\Delta_Y)-\mathrm{tr}(D_0'^{-1}D_2')\Big]-\mathrm{tr}(\Theta)-\mathrm{tr}\big((D_0D_0')^{-1}\Lambda\big).
\]
The first bracket is exactly $f(X)$, the second exactly $f(Y)$, and by $(\ast)$ the last two terms cancel: $-\mathrm{tr}(\Theta)-\mathrm{tr}((D_0D_0')^{-1}\Lambda)=-\mathrm{tr}(\Theta)+\mathrm{tr}(\Theta)=0$. Hence
\[
f(XY)=f(X)+f(Y).
\]

\emph{Step 5: conclude.} By Theorem~\ref{thm:general-ber} applied to $X$, $Y$, and $XY$,
\[
\mathrm{Ber}(X)\mathrm{Ber}(Y)=\frac{\det A_0}{\det D_0}\frac{\det A_0'}{\det D_0'}\big[1+f(X)\theta_1\theta_2\big]\big[1+f(Y)\theta_1\theta_2\big]=\frac{\det(A_0A_0')}{\det(D_0D_0')}\big[1+(f(X)+f(Y))\theta_1\theta_2\big],
\]
the cross term vanishing since $(\theta_1\theta_2)^2=0$; and
\[
\mathrm{Ber}(XY)=\frac{\det(A_0A_0')}{\det(D_0D_0')}\big[1+f(XY)\theta_1\theta_2\big]=\frac{\det(A_0A_0')}{\det(D_0D_0')}\big[1+(f(X)+f(Y))\theta_1\theta_2\big],
\]
using Step 4. These agree, so $\mathrm{Ber}(XY)=\mathrm{Ber}(X)\mathrm{Ber}(Y)$.
\end{proof}

\begin{Corollary}[Recovery of the $1|1$ multiplicativity theorem]\label{cor:recover11mult}
For $m=n=1$, with $A_2=A_2'=D_2=D_2'=0$ and the notation of Corollary~\ref{cor:recover11}, $f(X)=-\Delta/(ad)$, $f(Y)=-\Delta'/(a'd')$, and Theorem~\ref{thm:general-mult} gives
\[
\mathrm{Ber}(XY)=\frac{aa'}{dd'}\Big[1-\Big(\frac{\Delta}{ad}+\frac{\Delta'}{a'd'}\Big)\theta_1\theta_2\Big]=\frac{aa'}{dd'}-\Big(\frac{a}{d}\cdot\frac{\Delta'}{d'^2}+\frac{a'}{d'}\cdot\frac{\Delta}{d^2}\Big)\theta_1\theta_2,
\]
which is exactly Theorem~\ref{thm:11mult}.
\end{Corollary}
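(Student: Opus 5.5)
The plan is to obtain the statement by direct specialization of Theorem~\ref{thm:general-mult}, with no new computation beyond scalar algebra. First, fix the $1|1$ setting of Theorem~\ref{thm:11mult}. There $X$ has body data $a,d\in K^\times$ and odd data $\beta=p_1\theta_1+p_2\theta_2$, $\gamma=q_1\theta_1+q_2\theta_2$, and similarly for $Y$ with primes. In the notation of Section~\ref{sec:general-ber} this means $A_0=a$, $D_0=d$, $A_2=D_2=0$, $B_i=p_i$, $C_i=q_i$, and likewise for $Y$. The hypotheses of Theorem~\ref{thm:general-mult} then hold: $X,Y$ are even, and they are invertible because $a,d,a',d'$ are nonzero scalars.

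Second, read off $f(X)$ and $f(Y)$ from Corollary~\ref{cor:recover11}. The $1\times1$ matrix pairing of Theorem~\ref{thm:general-ber} is $(p_1q_2-p_2q_1)/d$. We must be careful to distinguish it from the scalar $\Delta=p_1q_2-p_2q_1$ of Section~\ref{sec:11case}, which is the $\Delta$ used in the corollary. Since the terms involving $A_2$ and $D_2$ vanish, this gives $f(X)=-\Delta/(ad)$ and $f(Y)=-\Delta'/(a'd')$. Notice that we never need to know that the product $XY$ has $A_2^{XY}\neq0$ in general (it contains $\Gamma$). Theorem~\ref{thm:general-mult} already expresses $\mathrm{Ber}(XY)$ purely through $f(X)+f(Y)$, so the $1|1$ shape of $X$ and $Y$ alone suffices.

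Third, substitute these values into the closed form of Theorem~\ref{thm:general-mult}, namely
\[
\mathrm{Ber}(XY)=\frac{aa'}{dd'}\bigl[1+(f(X)+f(Y))\theta_1\theta_2\bigr],
\]
which yields the first displayed expression. Then distribute the prefactor term by term:
\[
\frac{aa'}{dd'}\cdot\frac{\Delta}{ad}=\frac{a'}{d'}\cdot\frac{\Delta}{d^2},\qquad \frac{aa'}{dd'}\cdot\frac{\Delta'}{a'd'}=\frac{a}{d}\cdot\frac{\Delta'}{d'^2},
\]
which gives the second expression. This matches the formula of Theorem~\ref{thm:11mult} verbatim, and the equality $\mathrm{Ber}(XY)=\mathrm{Ber}(X)\mathrm{Ber}(Y)$ is inherited directly from Theorem~\ref{thm:general-mult}.

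The only step requiring care is the bookkeeping between the two meanings of $\Delta$: the matrix pairing, which includes the factor $D_0^{-1}=d^{-1}$, versus the scalar $p_1q_2-p_2q_1$. Mixing them up would produce a spurious power of $d$. I would settle this by recomputing $f(X)$ explicitly from the definition in Theorem~\ref{thm:general-ber} rather than quoting it.
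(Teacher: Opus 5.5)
Your proposal is correct and follows the paper's route exactly. You specialize Theorem~\ref{thm:general-mult} to $m=n=1$ with $A_2=A_2'=D_2=D_2'=0$, substitute $f(X)=-\Delta/(ad)$ and $f(Y)=-\Delta'/(a'd')$ into $\frac{aa'}{dd'}\bigl[1+(f(X)+f(Y))\theta_1\theta_2\bigr]$, and distribute the prefactor. Your care over the two meanings of $\Delta$ is also right: Corollary~\ref{cor:recover11} uses $\Delta$ for the $1\times1$ matrix pairing $(p_1q_2-p_2q_1)/d$, whereas $f(X)=-\Delta/(ad)$ and the final formula need $\Delta$ to be the scalar $p_1q_2-p_2q_1$ of Theorem~\ref{thm:11mult}, which is how you resolved it.
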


\begin{Remark}
Theorem~\ref{thm:general-mult} is, in the sense of Remark~\ref{rem:scope}, an explicit verification rather than a new existence statement: multiplicativity of the Berezinian over an arbitrary supercommutative ring is classical~\cite{Berezin1987,Leites1980}. What Steps 1--4 add is a fully coordinate-level proof, for this specific rank-two ground ring and arbitrary block size, in which the cancellation forcing multiplicativity is exhibited as a single trace-cyclicity identity $(\ast)$ rather than left to the abstract Schur-complement argument. For $m=n=1$, $(\ast)$ degenerates to the scalar identity $\Delta_{12}=-\Delta_{21}$ used implicitly in the four-term cancellation of the $1|1$ note; Theorem~\ref{thm:general-mult} identifies that scalar cancellation as an instance of trace cyclicity, which is what makes the argument uniform in $m,n$.
\end{Remark}

\begin{table}[h]
\centering
\caption{Notation used in Sections~\ref{sec:general-ber}--\ref{sec:general-mult}.}\label{tab:notation}
\begin{tabular}{|l|l|}
\hline
Symbol & Meaning \\
\hline
$A_0\in M_{m\times m}(K)$ & body (degree-$0$) part of the even block $A$ \\
$A_2\in M_{m\times m}(K)$ & coefficient of $\theta_1\theta_2$ in $A$, i.e.\ $A=A_0+A_2\theta_1\theta_2$ \\
$D_0,D_2\in M_{n\times n}(K)$ & body and $\theta_1\theta_2$-coefficient of the even block $D$ \\
$B_1,B_2\in M_{m\times n}(K)$ & coefficients of $\theta_1,\theta_2$ in the odd block $B=B_1\theta_1+B_2\theta_2$ \\
$C_1,C_2\in M_{n\times m}(K)$ & coefficients of $\theta_1,\theta_2$ in the odd block $C=C_1\theta_1+C_2\theta_2$ \\
$\Delta\in M_{m\times m}(K)$ & $B_1D_0^{-1}C_2-B_2D_0^{-1}C_1$; the odd correction pairing (Thm.~\ref{thm:general-ber}) \\
$f(X)\in K$ & $\theta_1\theta_2$-coefficient of $\mathrm{Ber}(X)$ relative to $\det A_0/\det D_0$ \\
$\Gamma,\Lambda\in M_{m\times m}(K),M_{n\times n}(K)$ & cross-pairings $B_1C_2'-B_2C_1'$, $C_1B_2'-C_2B_1'$ arising in $XY$ \\
$\Theta\in M_{m\times m}(K)$ & mixed pairing of $Y$'s $B$-blocks with $X$'s $C$-blocks (proof of Thm.~\ref{thm:general-mult}) \\
\hline
\end{tabular}
\end{table}

\section{A fully worked example at $m=2,n=1$}\label{sec:example}

The formulas of Sections~\ref{sec:general-ber}--\ref{sec:general-mult} are illustrated above only at $m=n=1$ (Corollaries~\ref{cor:recover11},~\ref{cor:recover11mult}), where they simply recover known results. We now work out a genuine block-size instance, $m=2,n=1$, with fully explicit rational data, computed exactly and cross-checked by two independent routes: (i) Theorem~\ref{thm:general-ber} applied directly, and (ii) the raw Schur-complement definition $\mathrm{Ber}(X)=\det(A-BD^{-1}C)\det(D)^{-1}$ with no shortcuts. All computations below were carried out symbolically over $\mathbb{Q}$ with exact rational arithmetic.

\begin{Example}\label{ex:worked}
Take $K=\mathbb{Q}$, $m=2$, $n=1$, so $X\in M_{2|1}(R)$ has $A$ a $2\times2$ even block, $D$ a scalar even entry, $B$ a $2\times1$ odd block, $C$ a $1\times2$ odd block. Let
\[
A_0=\begin{pmatrix}2&1\\0&3\end{pmatrix},\quad A_2=\begin{pmatrix}1&0\\0&-1\end{pmatrix},\quad D_0=2,\quad D_2=5,
\]
\[
B_1=\begin{pmatrix}1\\2\end{pmatrix},\quad B_2=\begin{pmatrix}0\\1\end{pmatrix},\quad C_1=\begin{pmatrix}1&0\end{pmatrix},\quad C_2=\begin{pmatrix}1&1\end{pmatrix},
\]
so that $B=\begin{pmatrix}\theta_1\\2\theta_1+\theta_2\end{pmatrix}$ and $C=\begin{pmatrix}\theta_1+\theta_2&\theta_2\end{pmatrix}$. Since $\det A_0=6\neq0$ and $D_0=2\neq0$, $X$ is invertible.

\emph{Step 1 (the pairing $\Delta$).} $D_0^{-1}=\tfrac12$, so
\[
\Delta=B_1D_0^{-1}C_2-B_2D_0^{-1}C_1=\tfrac12\begin{pmatrix}1\\2\end{pmatrix}\begin{pmatrix}1&1\end{pmatrix}-\tfrac12\begin{pmatrix}0\\1\end{pmatrix}\begin{pmatrix}1&0\end{pmatrix}=\begin{pmatrix}1/2&1/2\\1/2&1\end{pmatrix},
\]
a genuinely nonzero, non-symmetric-in-general $2\times2$ matrix (here symmetric only by coincidence of the chosen data).

\emph{Step 2 ($f(X)$).} $A_0^{-1}=\tfrac16\begin{pmatrix}3&-1\\0&2\end{pmatrix}$, and direct computation gives $\mathrm{tr}(A_0^{-1}A_2)=1/6$, $\mathrm{tr}(A_0^{-1}\Delta)=1/2$, $D_0^{-1}D_2=5/2$. Hence
\[
f(X)=\frac16-\frac12-\frac52=-\frac{17}{6}.
\]

\emph{Step 3 (Berezinian via Theorem~\ref{thm:general-ber}).} $\det A_0=6$, $\det D_0=2$, so
\[
\mathrm{Ber}(X)=\frac{6}{2}\Big[1-\frac{17}{6}\theta_1\theta_2\Big]=3-\frac{17}{2}\,\theta_1\theta_2.
\]
A direct evaluation of $\det(A-BD^{-1}C)\det(D)^{-1}$ from the raw block data, with no use of Theorem~\ref{thm:general-ber}, reproduces $3-\tfrac{17}{2}\theta_1\theta_2$ exactly, confirming the formula at this block size.

\emph{Step 4 (multiplicativity check).} Take a second, independent even invertible $Y\in M_{2|1}(R)$ with
\[
A_0'=\begin{pmatrix}1&1\\1&2\end{pmatrix},\ A_2'=\begin{pmatrix}0&2\\1&0\end{pmatrix},\ D_0'=3,\ D_2'=1,\ B_1'=\begin{pmatrix}1\\0\end{pmatrix},\ B_2'=\begin{pmatrix}1\\1\end{pmatrix},\ C_1'=\begin{pmatrix}0&2\end{pmatrix},\ C_2'=\begin{pmatrix}1&0\end{pmatrix}.
\]
Theorem~\ref{thm:general-ber} gives $\Delta_Y=\begin{pmatrix}1/3&-2/3\\0&-2/3\end{pmatrix}$ and $f(Y)=-3-\tfrac23-\tfrac13=-4$, so $\mathrm{Ber}(Y)=\tfrac13\big[1-4\theta_1\theta_2\big]=\tfrac13-\tfrac43\theta_1\theta_2$. By Theorem~\ref{thm:general-mult}, $f(XY)$ should equal $f(X)+f(Y)=-\tfrac{17}{6}-4=-\tfrac{41}{6}$, and since $\det(A_0A_0')=6\cdot1=6=\det(D_0D_0')=2\cdot3$,
\[
\mathrm{Ber}(X)\mathrm{Ber}(Y)=\Big(3-\frac{17}{2}\theta_1\theta_2\Big)\Big(\frac13-\frac43\theta_1\theta_2\Big)=1-\frac{41}{6}\,\theta_1\theta_2.
\]
We verified this independently by forming the block product $XY=\begin{pmatrix}AA'+BC'&AB'+BD'\\CA'+DC'&CB'+DD'\end{pmatrix}$ entrywise from the raw data above (a $2\times2$, $2\times1$, $1\times2$, and $1\times1$ block respectively, each a $\mathbb{Q}$-linear combination of $1,\theta_1,\theta_2,\theta_1\theta_2$) and evaluating $\mathrm{Ber}(XY)=\det(A_{XY}-B_{XY}D_{XY}^{-1}C_{XY})\det(D_{XY})^{-1}$ directly from this product, with no use of Theorem~\ref{thm:general-mult}. The result is $1-\tfrac{41}{6}\theta_1\theta_2$, in exact agreement with $\mathrm{Ber}(X)\mathrm{Ber}(Y)$ above.
\end{Example}

This example exhibits everything Theorems~\ref{thm:general-ber} and~\ref{thm:general-mult} claim at a block size where the pairing $\Delta$ is a genuine $2\times2$ matrix rather than a scalar, and where the multiplicativity check involves two block-size-$2|1$ matrices with independent, generic odd data on both generators, rather than the identity-like or scalar data that would make the check vacuous.

\section{Extension to $r$ odd generators}\label{sec:rgen}

The formulas above are tied to the rank-two ground ring $R=K[\theta_1,\theta_2]$, whose even part $R_{\bar0}=K\oplus K\theta_1\theta_2$ has a one-dimensional nilpotent part. We now extend Theorems~\ref{thm:general-ber} and~\ref{thm:general-mult} to an arbitrary number $r\geq2$ of odd generators, addressing the extension flagged in the Conclusion. The correction term $\Delta$ becomes valued not in $K$ but in the $\binom{r}{2}$-dimensional space $\Lambda^2(V)$ of the $r$-dimensional space of odd generators $V$; at $r=2$, $\dim\Lambda^2(V)=1$ and everything below reduces exactly to Sections~\ref{sec:general-ber}--\ref{sec:general-mult}.

\subsection{The second-order ground ring $R_r^{(2)}$}

Let $V=K\theta_1\oplus\cdots\oplus K\theta_r$ and let $\Lambda(V)=\bigoplus_{k=0}^r\Lambda^k(V)$ be the exterior (Grassmann) algebra on $V$, $\dim\Lambda^k(V)=\binom rk$. Let
\[
I:=\bigoplus_{k\geq3}\Lambda^k(V)\ \subset\ \Lambda(V).
\]

\begin{Lemma}\label{lem:ideal}
$I$ is a two-sided ideal of $\Lambda(V)$, and $R_r^{(2)}:=\Lambda(V)/I=\Lambda^0(V)\oplus\Lambda^1(V)\oplus\Lambda^2(V)$ is a supercommutative $K$-algebra of dimension $1+r+\binom r2$, with $\big(R_r^{(2)}\big)_{\bar0}=\Lambda^0(V)\oplus\Lambda^2(V)$ and $\big(R_r^{(2)}\big)_{\bar1}=\Lambda^1(V)=V$. For $r=2$, $I=0$ and $R_2^{(2)}=\Lambda(V)$ is exactly the ground ring $R$ of Section~2.
\end{Lemma}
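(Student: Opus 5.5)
The plan is to verify the ideal property, then read off the algebra structure of the quotient, its grading, and supercommutativity, all from the $\mathbb{Z}$-grading of $\Lambda(V)$.

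\emph{Ideal property.} The exterior algebra is $\mathbb{Z}$-graded: $\Lambda^j(V)\cdot\Lambda^k(V)\subseteq\Lambda^{j+k}(V)$. So if $x\in I$ is homogeneous of degree $k\geq3$ and $y\in\Lambda^j(V)$ with $j\geq0$, then $xy$ and $yx$ lie in $\Lambda^{j+k}(V)$. Since $j+k\geq3$, both products are in $I$. Extending bilinearly over homogeneous components shows that $I$ is a two-sided ideal. In fact $I$ is exactly the span of all monomials $\theta_{i_1}\cdots\theta_{i_k}$ with $k\geq3$.

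\emph{Structure of the quotient.} As a vector space $\Lambda(V)=\Lambda^{\leq2}(V)\oplus I$. Hence the quotient map restricts to a linear isomorphism $\Lambda^0(V)\oplus\Lambda^1(V)\oplus\Lambda^2(V)\to R_r^{(2)}$. This gives the dimension $1+r+\binom r2$ from $\dim\Lambda^k(V)=\binom rk$. The multiplication on $R_r^{(2)}$ is the exterior product followed by truncation of all components of degree $\geq3$.

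\emph{Grading.} $I$ is spanned by homogeneous monomials, so it is a graded ideal for the parity grading $\Lambda(V)_{\bar\imath}=\bigoplus_{k\equiv i\ (2)}\Lambda^k(V)$. The quotient therefore inherits a $\mathbb{Z}/2$-grading, and intersecting with $\Lambda^{\leq2}(V)$ gives $(R_r^{(2)})_{\bar0}=\Lambda^0\oplus\Lambda^2$ and $(R_r^{(2)})_{\bar1}=\Lambda^1=V$.

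\emph{Supercommutativity.} $\Lambda(V)$ is supercommutative: for monomials of degrees $j$ and $k$, $xy=(-1)^{jk}yx$, proved by moving each generator past the others using $\theta_a\theta_b=-\theta_b\theta_a$. The sign $(-1)^{jk}$ depends only on parities. This identity descends to the quotient because the quotient map is a graded algebra homomorphism.

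\emph{The case $r=2$.} Here $\Lambda^k(V)=0$ for $k\geq3$, so $I=0$ and $R_2^{(2)}=\Lambda(V)$. It remains to identify $\Lambda(V)$ with $K[\theta_1,\theta_2]/(\theta_1^2,\theta_2^2,\theta_1\theta_2+\theta_2\theta_1)$, reading the latter as the free (noncommutative) algebra on $\theta_1,\theta_2$ modulo these relations. This is the standard presentation of the exterior algebra. Both sides have basis $1,\theta_1,\theta_2,\theta_1\theta_2$, and the multiplication tables agree with Section~2.

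The only point needing care is this last identification: one must read the presentation in Section~2 as a quotient of the free algebra, not the commutative polynomial ring. I would address it by noting that the exterior algebra's universal property gives a surjection onto $\Lambda(V)$, which is an isomorphism by a dimension count, $4=4$. Everything else is routine degree bookkeeping.
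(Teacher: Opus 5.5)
Your proof is correct and follows essentially the same route as the paper. Degree additivity $\Lambda^j(V)\cdot\Lambda^k(V)\subseteq\Lambda^{j+k}(V)$ gives the two-sided ideal, the quotient inherits its product, parity grading and supercommutativity from $\Lambda(V)$, the dimension count comes from $\dim\Lambda^k(V)=\binom rk$, and $I=0$ automatically when $r=2$. Your extra care in identifying $R_2^{(2)}$ with the presentation of $R$ in Section~2, read as a quotient of the free algebra, goes beyond the paper, which simply takes $R$ to be the exterior algebra with basis $1,\theta_1,\theta_2,\theta_1\theta_2$. One small correction there: the surjection onto $\Lambda(V)$ you invoke comes from the universal property of that presentation, not from the universal property of $\Lambda(V)$, which produces maps out of $\Lambda(V)$ rather than into it.
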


\begin{proof}
For $x\in\Lambda^j(V)$ and $y\in\Lambda^k(V)$ with $k\geq3$, $xy\in\Lambda^{j+k}(V)$ and $j+k\geq3$ since $j\geq0$, so $xy\in I$; likewise $yx\in I$. Hence $I$ is a two-sided (graded) ideal, and the quotient inherits a well-defined associative product from $\Lambda(V)$, supercommutative because $\Lambda(V)$ is. The dimension count and the identification of graded pieces are immediate from $\dim\Lambda^k(V)=\binom rk$. For $r=2$, $\Lambda^k(V)=0$ for $k\geq3$ automatically, so $I=0$ and no quotient is needed.
\end{proof}

We call $R_r^{(2)}$ the \emph{second-order truncation} of the rank-$r$ Grassmann algebra; it is the natural ground ring for working ``to second order in the odd generators,'' exactly the regime the Introduction already isolates as the source of the nonvanishing correction term. Two structural facts about $R_r^{(2)}$ do the work of Lemma~\ref{lem:sqzero} at general $r$:

\begin{Lemma}[Matrix square-zero perturbation]\label{lem:sqzero-r}
Let $N\in\mathrm{GL}_k(K)$ and let $L$ be a $k\times k$ matrix with entries in $\Lambda^2(V)\subset\big(R_r^{(2)}\big)_{\bar0}$. Then, over $R_r^{(2)}$,
\[
(N+L)^{-1}=N^{-1}-N^{-1}LN^{-1},\qquad \det(N+L)=\det(N)\big(1+\mathrm{tr}(N^{-1}L)\big).
\]
\end{Lemma}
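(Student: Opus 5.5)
The argument follows Lemma~\ref{lem:sqzero} closely. The single scalar $\varepsilon$ is now replaced by a matrix $L$ whose entries lie in $\Lambda^2(V)$. The one structural input is that in $R_r^{(2)}$ any product of two elements of $\Lambda^2(V)$ lies in $\Lambda^4(V)\subset I$ and so vanishes. Moreover, elements of $\Lambda^2(V)$ are even, hence central in the supercommutative ring $R_r^{(2)}$. Consequently the entries of $L$ commute with everything, in particular with the scalar entries of $N$ and $N^{-1}$. Any matrix product containing two factors with entries in $\Lambda^2(V)$ (such as $LN^{-1}L$) is zero entrywise, because each entry is a sum of products $\ell\, c\, \ell'$ with $\ell,\ell'\in\Lambda^2(V)$ and $c\in K$, and each such term equals $c\,\ell\ell'=0$.

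For the inverse, I will multiply out directly:
\[
(N+L)(N^{-1}-N^{-1}LN^{-1})=I+LN^{-1}-LN^{-1}-LN^{-1}LN^{-1}=I,
\]
where the last term vanishes by the observation above. Since $R_r^{(2)}$ is commutative on its even part and all entries here are even, a one-sided inverse of a square matrix over the commutative ring $(R_r^{(2)})_{\bar0}$ is two-sided. Alternatively, I will check the product in the other order by the identical computation.

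For the determinant, I will factor $\det(N+L)=\det(N)\det(I+N^{-1}L)$, which is legitimate because all entries lie in the commutative ring $(R_r^{(2)})_{\bar0}$, where multiplicativity of $\det$ holds. Put $M=N^{-1}L$; it still has entries in $\Lambda^2(V)$. Expanding $\det(I+M)$ by the Leibniz formula, every permutation term is a product of factors $\delta_{i\sigma(i)}+M_{i\sigma(i)}$. Any monomial using two or more entries of $M$ vanishes. The monomials using exactly one entry $M_{ij}$ must take Kronecker deltas in all other positions, which forces $\sigma=\mathrm{id}$ and $i=j$. Hence
\[
\det(I+M)=1+\sum_i M_{ii}=1+\mathrm{tr}(N^{-1}L).
\]

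The only step needing care is justifying that the determinant calculus (the Leibniz expansion and multiplicativity) applies over $R_r^{(2)}$. I will handle this by noting that every matrix involved has entries in the commutative subring $(R_r^{(2)})_{\bar0}=\Lambda^0(V)\oplus\Lambda^2(V)$, on which ordinary commutative determinant theory is valid. Everything else is the vanishing $\Lambda^2\cdot\Lambda^2=0$ supplied by Lemma~\ref{lem:ideal}.
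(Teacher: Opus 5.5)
Your proposal is correct and follows the paper's proof essentially step for step. Both arguments rest on the vanishing of $\Lambda^2(V)\cdot\Lambda^2(V)\subset\Lambda^4(V)\subset I$, verify the inverse by direct multiplication, and expand $\det(I+N^{-1}L)$ by the Leibniz formula, where only $\sigma=\mathrm{id}$ with at most one entry of $N^{-1}L$ survives. Your extra remarks fill in small points the paper leaves implicit: centrality of $\Lambda^2(V)$, two-sidedness of the inverse, and the validity of determinant calculus over the commutative subring $(R_r^{(2)})_{\bar0}$.
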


\begin{proof}
Any product of two entries of $N^{-1}L$ (or of $L$ itself) lies in $\Lambda^2(V)\cdot\Lambda^2(V)\subset\Lambda^4(V)\subset I$, hence is $0$ in $R_r^{(2)}$. For the first identity, $(N+L)(N^{-1}-N^{-1}LN^{-1})=I-LN^{-1}+LN^{-1}-LN^{-1}LN^{-1}=I$, the last term vanishing by the above. For the second, $\det(N+L)=\det(N)\det(I+N^{-1}L)$, and expanding $\det(I+N^{-1}L)=\sum_\sigma\mathrm{sgn}(\sigma)\prod_i(\delta_{i\sigma(i)}+(N^{-1}L)_{i\sigma(i)})$: any permutation $\sigma$ with two or more non-fixed points forces at least two entries of $N^{-1}L$ into the product, hence contributes $0$; only $\sigma=\mathrm{id}$ survives, and in $\prod_i(1+(N^{-1}L)_{ii})$ every cross term again involves a product of two entries of $L$ and vanishes, leaving $1+\sum_i(N^{-1}L)_{ii}=1+\mathrm{tr}(N^{-1}L)$.
\end{proof}

\subsection{The general Berezinian formula for $r$ generators}

Let $X=\begin{pmatrix}A&B\\C&D\end{pmatrix}\in M_{m|n}(R_r^{(2)})$ be even, i.e.\ $A=A_0+A_2$, $D=D_0+D_2$ with $A_0\in M_m(K)$, $A_2$ a $\Lambda^2(V)$-valued $m\times m$ matrix (and similarly $D_0,D_2$), and $B=\sum_{i=1}^rB^i\theta_i$, $C=\sum_{i=1}^rC^i\theta_i$ with $B^i\in M_{m\times n}(K)$, $C^i\in M_{n\times m}(K)$. Write $A_2=\sum_{i<j}A^{ij}\theta_i\theta_j$, $D_2=\sum_{i<j}D^{ij}\theta_i\theta_j$, and extend $A^{ij},D^{ij}$ antisymmetrically ($A^{ji}:=-A^{ij}$) for $i>j$, $A^{ii}:=0$.

\begin{Theorem}[General Berezinian formula, $r$ generators]\label{thm:rgen-ber}
With $A_0,D_0$ invertible over $K$, define, for each pair $i<j$,
\[
\Delta^{ij}:=B^iD_0^{-1}C^j-B^jD_0^{-1}C^i\ \in M_m(K),
\]
and set $\Delta:=\sum_{i<j}\Delta^{ij}\theta_i\theta_j\in M_m(K)\otimes\Lambda^2(V)$. Then
\[
\mathrm{Ber}(X)=\frac{\det A_0}{\det D_0}\big[1+F(X)\big],\qquad F(X):=\sum_{i<j}f_{ij}(X)\,\theta_i\theta_j,
\]
\[
f_{ij}(X):=\mathrm{tr}\big(A_0^{-1}A^{ij}\big)-\mathrm{tr}\big(A_0^{-1}\Delta^{ij}\big)-\mathrm{tr}\big(D_0^{-1}D^{ij}\big)\in K.
\]
At $r=2$ this is exactly Theorem~\ref{thm:general-ber}, with $\Delta=\Delta^{12}$ and $F(X)=f_{12}(X)\theta_1\theta_2=f(X)\theta_1\theta_2$.
\end{Theorem}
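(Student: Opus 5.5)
The plan is to repeat the proof of Theorem~\ref{thm:general-ber} word for word, with Lemma~\ref{lem:sqzero-r} in place of Lemma~\ref{lem:sqzero}. The only new input is the bookkeeping of the $\binom r2$ independent directions $\theta_i\theta_j$ in place of the single $\theta_1\theta_2$.

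\emph{Step 1: $D^{-1}$.} By Lemma~\ref{lem:sqzero-r} with $N=D_0$ and $L=D_2$ we get $D^{-1}=D_0^{-1}-D_0^{-1}D_2D_0^{-1}$. The second summand has entries in $\Lambda^2(V)$. Sandwiched between $B$ and $C$, whose entries lie in $\Lambda^1(V)$, it produces entries in $\Lambda^4(V)\subset I$, which vanish in $R_r^{(2)}$. Hence $BD^{-1}C=BD_0^{-1}C$.

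\emph{Step 2: the pairing.} Next I expand
\[
BD_0^{-1}C=\sum_{i,j}B^iD_0^{-1}C^j\,\theta_i\theta_j .
\]
This uses that the $K$-matrices commute with the $\theta$'s. The diagonal terms $i=j$ drop out by $\theta_i^2=0$. Pairing $(i,j)$ with $(j,i)$ and using $\theta_j\theta_i=-\theta_i\theta_j$ gives $\sum_{i<j}\Delta^{ij}\theta_i\theta_j=\Delta$. Therefore
\[
A-BD^{-1}C=A_0+(A_2-\Delta),
\]
and the perturbation $A_2-\Delta$ has entries in $\Lambda^2(V)$.

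\emph{Step 3: determinants.} Applying the second identity of Lemma~\ref{lem:sqzero-r} twice gives
\[
\det(A-BD^{-1}C)=\det A_0\,\bigl[1+\mathrm{tr}(A_0^{-1}(A_2-\Delta))\bigr],\qquad
\det D=\det D_0\,\bigl[1+\mathrm{tr}(D_0^{-1}D_2)\bigr].
\]
Since the product of any two elements of $\Lambda^2(V)$ lies in $I$, the inverse of $1+x$ with $x\in\Lambda^2(V)$ is $1-x$. Multiplying out, the cross term again dies, leaving
\[
\frac{\det A_0}{\det D_0}\Bigl[1+\mathrm{tr}(A_0^{-1}A_2)-\mathrm{tr}(A_0^{-1}\Delta)-\mathrm{tr}(D_0^{-1}D_2)\Bigr].
\]

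Finally, $K$-linearity of the trace and of left multiplication by $A_0^{-1}$ and $D_0^{-1}$ lets me decompose each trace along the basis $\theta_i\theta_j$, $i<j$, of $\Lambda^2(V)$. For example, $\mathrm{tr}(A_0^{-1}A_2)=\sum_{i<j}\mathrm{tr}(A_0^{-1}A^{ij})\theta_i\theta_j$. Collecting coefficients yields exactly $f_{ij}(X)$, so the bracket equals $1+F(X)$.

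For the $r=2$ specialization, there is a single pair $(1,2)$, $I=0$ by Lemma~\ref{lem:ideal}, and $\Delta^{12}$ is the $\Delta$ of Theorem~\ref{thm:general-ber}.

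The only step needing care is the sign and ordering bookkeeping in Step 2. The scalar matrices must be moved past the $\theta$'s legitimately, which is fine because they have entries in $K$. The antisymmetrization must also produce $B^iD_0^{-1}C^j-B^jD_0^{-1}C^i$ rather than its negative. Everything else is formal and relies on the single fact $\Lambda^2\cdot\Lambda^{\ge 1}\cdot\Lambda^{\ge1}\subset I$, together with $\Lambda^2\cdot\Lambda^2\subset I$.
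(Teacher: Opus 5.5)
Your proposal is correct and follows the paper's proof essentially step for step. Like the paper, you kill the $-BD_0^{-1}D_2D_0^{-1}C$ term because it lies in $\Lambda^4(V)\subset I$, and you antisymmetrize $\sum_{i,j}B^iD_0^{-1}C^j\theta_i\theta_j$ into $\Delta$. You then apply Lemma~\ref{lem:sqzero-r} to both determinants, invert $1+x$ as $1-x$, and discard the $\Lambda^2(V)\cdot\Lambda^2(V)$ cross terms. Your final decomposition of each trace along the basis $\theta_i\theta_j$ is only the coefficient-level form of what the paper writes directly as sums over $i<j$.
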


\begin{proof}
Since $B,C$ are pure degree-$1$ (no room for higher terms, as $\big(R_r^{(2)}\big)_{\bar1}=\Lambda^1(V)$ exactly), $BD^{-1}C$ has degree $\geq1+1=2$ contributions from $BD_0^{-1}C$ and degree $\geq1+2+1=4$ contributions from the correction $-BD_0^{-1}D_2D_0^{-1}C$ term of $D^{-1}$ (Lemma~\ref{lem:sqzero-r}); the latter lies in $I$ and vanishes. Hence, exactly as in Theorem~\ref{thm:general-ber},
\[
BD^{-1}C=BD_0^{-1}C=\sum_{i,j}B^iD_0^{-1}C^j\,\theta_i\theta_j=\sum_{i<j}\big(B^iD_0^{-1}C^j-B^jD_0^{-1}C^i\big)\theta_i\theta_j=\Delta,
\]
using $\theta_i\theta_j=-\theta_j\theta_i$, $\theta_i^2=0$. Thus $A-BD^{-1}C=A_0+(A_2-\Delta)$ with $A_2-\Delta=\sum_{i<j}(A^{ij}-\Delta^{ij})\theta_i\theta_j$ a $\Lambda^2(V)$-valued matrix, so Lemma~\ref{lem:sqzero-r} gives
\[
\det(A-BD^{-1}C)=\det A_0\Big[1+\sum_{i<j}\mathrm{tr}\big(A_0^{-1}(A^{ij}-\Delta^{ij})\big)\theta_i\theta_j\Big],\quad \det D=\det D_0\Big[1+\sum_{i<j}\mathrm{tr}(D_0^{-1}D^{ij})\theta_i\theta_j\Big],
\]
and $\det(D)^{-1}=\det(D_0)^{-1}\big[1-\sum_{i<j}\mathrm{tr}(D_0^{-1}D^{ij})\theta_i\theta_j\big]$ since the bracket squares to $0$ in $R_r^{(2)}$ (any product of two $\Lambda^2(V)$-valued terms lies in $\Lambda^4(V)\subset I$). Multiplying the two brackets and discarding the $\Lambda^2(V)\cdot\Lambda^2(V)$ cross terms (again in $I$) gives $\mathrm{Ber}(X)=\det(A_0)/\det(D_0)\,[1+F(X)]$ with $F(X)=\sum_{i<j}f_{ij}(X)\theta_i\theta_j$ as stated.
\end{proof}

\subsection{Multiplicativity for $r$ generators}

\begin{Theorem}[General multiplicativity, $r$ generators]\label{thm:rgen-mult}
Let $X,Y\in M_{m|n}(R_r^{(2)})$ be even invertible, with data as in Theorem~\ref{thm:rgen-ber} (primed for $Y$). Then $\mathrm{Ber}(XY)=\mathrm{Ber}(X)\mathrm{Ber}(Y)$, and
\[
F(XY)=F(X)+F(Y)\ \in\Lambda^2(V),\qquad\text{equivalently } f_{ij}(XY)=f_{ij}(X)+f_{ij}(Y)\text{ for every }i<j.
\]
At $r=2$ this is exactly Theorem~\ref{thm:general-mult}.
\end{Theorem}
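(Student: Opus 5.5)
The proof will follow Theorem~\ref{thm:general-mult} step by step, working throughout in $R_r^{(2)}$ and keeping track of one coefficient $\theta_i\theta_j$, $i<j$, at a time. First I block-multiply $X$ and $Y$. Any term of degree $\geq3$ lies in $I$ and is discarded. The diagonal blocks of $XY$ then have bodies $A_0A_0'$ and $D_0D_0'$. Their $\Lambda^2(V)$-parts are
\[
A_0A_2'+A_2A_0'+BC' \quad\text{and}\quad D_0D_2'+D_2D_0'+CB'.
\]
Expanding $BC'=\sum_{i,j}B^iC'^j\theta_i\theta_j$ and antisymmetrizing gives the $(ij)$-components
\[
\Gamma^{ij}=B^iC'^j-B^jC'^i,\qquad \Lambda^{ij}=C^iB'^j-C^jB'^i .
\]
The off-diagonal blocks are pure degree one:
\[
B_{XY}^i=A_0B'^i+B^iD_0',\qquad C_{XY}^i=C^iA_0'+D_0C'^i .
\]
Products of $A_2$ or $D_2$ with degree-one terms have degree $3$ and vanish in $R_r^{(2)}$.

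Next I apply Theorem~\ref{thm:rgen-ber} to $XY$ and expand the pairing. Substituting the expressions above, the $D_0D_0^{-1}$ and $D_0'^{-1}D_0'$ pairs cancel exactly as in Step~2 of Theorem~\ref{thm:general-mult}, giving
\[
\Delta^{ij}_{XY}=A_0\Theta^{ij}A_0'+A_0\Delta_Y^{ij}+\Delta_X^{ij}A_0'+\Gamma^{ij},
\qquad
\Theta^{ij}=B'^iD_0'^{-1}D_0^{-1}C^j-B'^jD_0'^{-1}D_0^{-1}C^i .
\]
This is the same computation as before with the labels $1,2$ replaced by $i,j$. The point is that the computation is bilinear and antisymmetric in the generator labels, so nothing specific to $r=2$ is used.

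Then I repeat the trace bookkeeping of Steps~3--4 componentwise. By cyclicity, $\mathrm{tr}((D_0D_0')^{-1}\Lambda^{ij})=-\mathrm{tr}(\Theta^{ij})$, which is the analogue of $(\ast)$. The terms $\mathrm{tr}((A_0A_0')^{-1}\Gamma^{ij})$ cancel between the $A_2$-part and the $\Delta$-part. What remains is $f_{ij}(XY)=f_{ij}(X)+f_{ij}(Y)$, and therefore $F(XY)=F(X)+F(Y)$.

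Finally, $\mathrm{Ber}(X)\mathrm{Ber}(Y)=\frac{\det(A_0A_0')}{\det(D_0D_0')}\big[1+F(X)+F(Y)\big]$, because $F(X)F(Y)\in\Lambda^4(V)\subset I$. This agrees with $\mathrm{Ber}(XY)$ as given by Theorem~\ref{thm:rgen-ber}. At $r=2$ there is only the component $ij=12$, and the argument reduces literally to Theorem~\ref{thm:general-mult}.

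The main obstacle I expect is the sign and antisymmetrization bookkeeping. I need to make sure that $\Lambda^{ij}$, defined from $CB'$, and $\Theta^{ij}$, defined from $B'D^{-1}C$, pick up opposite orderings of $i,j$, so that $(\ast)$ holds with the correct sign for every pair. I would check this by first writing everything as full sums over ordered pairs $(i,j)$ with coefficient $\theta_i\theta_j$, applying cyclicity there, and only then collecting into $i<j$.
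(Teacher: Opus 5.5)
Your proposal is correct and follows the paper's proof essentially verbatim. You fix $i<j$, redo Steps~1--5 of Theorem~\ref{thm:general-mult} with the labels $(1,2)$ replaced by $(i,j)$ (obtaining the same $\Gamma^{ij}$, $\Lambda^{ij}$ (the paper's $\Omega^{ij}$), $\Theta^{ij}$ and decomposition of $\Delta^{ij}_{XY}$), and kill $F(X)F(Y)$ in $\Lambda^4(V)\subset I$; the sign issue you flag comes out as you expect, since $\mathrm{tr}\,\Theta^{ij}$ and $\mathrm{tr}\big((D_0D_0')^{-1}\Lambda^{ij}\big)$ are built from the same two traces $\mathrm{tr}(D_0'^{-1}D_0^{-1}C^jB'^i)$ and $\mathrm{tr}(D_0'^{-1}D_0^{-1}C^iB'^j)$ with opposite signs.
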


\begin{proof}
Fix a pair $i<j$. Exactly as in Step~1 of the proof of Theorem~\ref{thm:general-mult}, block-multiplying $X,Y$ and collecting the $\theta_i\theta_j$-coefficient of each block of $XY$ involves only $A_0,D_0,B^i,B^j,C^i,C^j$ and their primed counterparts: terms mixing a $\Lambda^2(V)$-coefficient with a degree-$1$ block, or two $\Lambda^2(V)$-coefficients, land in $\Lambda^{\geq3}(V)\subset I$ and vanish, by the same degree count used there (which never used $r=2$, only that $B,C$ are pure degree $1$ and $A_2,D_2$ pure degree $2$). Concretely,
\[
\big(A_2^{XY}\big)^{ij}=A_0A'^{ij}+A^{ij}A_0'+\Gamma^{ij},\qquad \Gamma^{ij}:=B^iC'^j-B^jC'^i,
\]
\[
\big(D_2^{XY}\big)^{ij}=D_0D'^{ij}+D^{ij}D_0'+\Omega^{ij},\qquad \Omega^{ij}:=C^iB'^j-C^jB'^i,
\]
\[
B^{k,XY}=A_0B'^k+B^kD_0',\qquad C^{k,XY}=C^kA_0'+D_0C'^k\quad(k=1,\dots,r),
\]
each identical in form to the $r=2$ computation with the fixed index pair $(1,2)$ replaced by $(i,j)$ and the single running index $\theta_1,\theta_2$ replaced by $\theta_k$. Substituting $B^{k,XY},C^{k,XY}$ into $\Delta_{XY}^{ij}:=B^{i,XY}(D_0D_0')^{-1}C^{j,XY}-B^{j,XY}(D_0D_0')^{-1}C^{i,XY}$ gives, exactly as in Step~2 of that proof (verbatim, with $1,2$ relabelled $i,j$),
\[
\Delta_{XY}^{ij}=A_0\Theta^{ij}A_0'+A_0\Delta_Y^{ij}+\Delta_X^{ij}A_0'+\Gamma^{ij},\qquad \Theta^{ij}:=B'^iD_0'^{-1}D_0^{-1}C^j-B'^jD_0'^{-1}D_0^{-1}C^i,
\]
and Step~3's trace-cyclicity identity $\mathrm{tr}\big((D_0D_0')^{-1}\Omega^{ij}\big)=-\mathrm{tr}(\Theta^{ij})$ holds by the identical cyclic-trace computation, for every fixed pair $i<j$ separately. Step~4's bookkeeping then goes through verbatim with all objects superscripted by $ij$, giving $f_{ij}(XY)=f_{ij}(X)+f_{ij}(Y)$ for each $i<j$ independently, hence $F(XY)=\sum_{i<j}f_{ij}(XY)\theta_i\theta_j=F(X)+F(Y)$. Finally, as in Step~5,
\[
\mathrm{Ber}(X)\mathrm{Ber}(Y)=\frac{\det(A_0A_0')}{\det(D_0D_0')}\big[1+F(X)\big]\big[1+F(Y)\big]=\frac{\det(A_0A_0')}{\det(D_0D_0')}\big[1+F(X)+F(Y)\big],
\]
the cross term $F(X)F(Y)\in\Lambda^2(V)\cdot\Lambda^2(V)\subset\Lambda^4(V)\subset I$ vanishing, and this equals $\mathrm{Ber}(XY)=\det(A_0A_0')/\det(D_0D_0')\,[1+F(XY)]$ by Theorem~\ref{thm:rgen-ber} applied to $XY$.
\end{proof}

\begin{Remark}
The passage from $r=2$ to general $r$ costs nothing beyond bookkeeping precisely because Sections~\ref{sec:general-ber}--\ref{sec:general-mult} already isolate a single index pair $(1,2)$ throughout; nothing in either proof uses that $(1,2)$ is the \emph{only} pair available, only that $B,C$ are pure degree $1$ and $A_2,D_2$ pure degree $2$. Working over the truncation $R_r^{(2)}$ rather than the full Grassmann algebra $\Lambda(V)$ is what keeps this true: over the untruncated $\Lambda(V)$ with $r\geq4$, $R_{\bar0}$ contains $\Lambda^4(V),\Lambda^6(V),\dots$ as well, $R_{\bar1}$ contains $\Lambda^3(V),\Lambda^5(V),\dots$, and the clean second-order formulas above would pick up further correction terms from those higher exterior powers. We do not pursue the untruncated case here.
\end{Remark}

\section{The pairing $\Delta$, the alternating form on $R_{\bar1}$, and a Pl\"ucker-coordinate reading}

\begin{Remark}\label{rem:symplectic}
The two-dimensional space of odd generators $R_{\bar1}=K\theta_1\oplus K\theta_2$ carries a distinguished symplectic form $\omega:R_{\bar1}\times R_{\bar1}\to R_{\bar0}$, $\omega(\theta_1,\theta_2)=\theta_1\theta_2=-\omega(\theta_2,\theta_1)$, $\omega(\theta_i,\theta_i)=0$; this is simply the multiplication map of $R$ restricted to $R_{\bar1}\times R_{\bar1}$, landing in the top-degree line $K\theta_1\theta_2\subset R_{\bar0}$. Writing the odd blocks as pairs $B=(B_1,B_2)$, $C=(C_1,C_2)$ of ordinary $K$-matrices indexed by the basis $\theta_1,\theta_2$, the pairing $\Delta=B_1D_0^{-1}C_2-B_2D_0^{-1}C_1$ of Theorem~\ref{thm:general-ber} is exactly the image of $B\otimes D_0^{-1}\otimes C$, contracted over the shared copy of $R_{\bar1}$ using $\omega$, and re-expressed in the $\theta_1\theta_2$-coefficient. In the scalar case $m=n=1$, $\Delta=\omega(\beta,\gamma)/(\theta_1\theta_2)\cdot d^{-1}$ reduces to the $2\times2$ determinant $p_1q_2-p_2q_1$ of Proposition~\ref{prop:11}, i.e.\ to $\omega$ applied directly to the coefficient vectors of $\beta,\gamma$. At general rank $r$ (Section~\ref{sec:rgen}), $\omega$ is replaced by the tautological alternating map $V\times V\to\Lambda^2(V)$, $(v,w)\mapsto v\wedge w$, which for $r=2$ is $\omega$ composed with the identification $\Lambda^2(V)\cong K\theta_1\theta_2$; the pairing $\Delta$ of Theorem~\ref{thm:rgen-ber} is the corresponding contraction, now valued in $\Lambda^2(V)$ rather than in $K$, and is genuinely not reducible to a scalar once $r\geq3$, since $\dim\Lambda^2(V)=\binom r2>1$.
\end{Remark}

\begin{Remark}[A Pl\"ucker-coordinate reading of $\Delta$]\label{rem:plucker}
The identification of $\Delta$ with an element of $\Lambda^2(V)$ (or, at general block size, of $M_m(K)\otimes\Lambda^2(V)$) invites comparison with the Pl\"ucker embedding of the Grassmannian $\mathrm{Gr}(2,V)$ of $2$-planes in $V$ into $\mathbb{P}(\Lambda^2V)$, where a $2$-plane $\mathrm{span}(b,c)\subset V$ is sent to the class of the decomposable bivector $b\wedge c$. We make this precise in the simplest sub-case $m=n=1$ at general rank $r$: an even $X\in M_{1|1}(R_r^{(2)})$ has scalar odd off-diagonal entries $\beta=\sum_ib^i\theta_i$, $\gamma=\sum_ic^i\theta_i$, i.e.\ $b=(b^1,\dots,b^r)$ and $c=(c^1,\dots,c^r)$ are literally vectors in $V$ under the basis identification $\theta_i\leftrightarrow e_i$. Theorem~\ref{thm:rgen-ber} then gives $\Delta=d_0^{-1}(b\wedge c)\in\Lambda^2(V)$ (with $d_0\in K^\times$ the body of $D$), so the coefficients $\Delta^{ij}=d_0^{-1}(b^ic^j-b^jc^i)$ are, up to the overall scalar $d_0^{-1}$, exactly the Pl\"ucker coordinates of the $2$-plane $\mathrm{span}(b,c)\subset V$ whenever $b,c$ are independent: $\Delta$ is a decomposable element of $\Lambda^2(V)$, and its class in $\mathbb{P}(\Lambda^2V)$ is precisely the image of $\mathrm{span}(b,c)$ under the Pl\"ucker embedding of $\mathrm{Gr}(2,V)\hookrightarrow\mathbb{P}(\Lambda^2V)$. At general block size $m,n$, $\Delta\in M_m(K)\otimes\Lambda^2(V)$ is the image of $B\otimes C\in\mathrm{Hom}(V,K^{m\times n})\otimes\mathrm{Hom}(V,K^{n\times m})$ under the map that composes matrices through $D_0^{-1}$ and then antisymmetrizes over $V\otimes V\to\Lambda^2(V)$; equivalently, it is the $M_m(K)$-family of Pl\"ucker-type data obtained by pairing, for each pair of row/column indices, the corresponding $1|1$ Pl\"ucker bivector as above. We do not develop the resulting super-Grassmannian geometry (e.g.\ an identification of $\Delta$ with a tangent vector to a super-Grassmannian at the point corresponding to $D$, in the spirit of the classical description of $\mathrm{Gr}(2,V)$'s tangent space via $\mathrm{Hom}$ of the tautological sub- and quotient bundles) beyond this coordinate-level observation, and leave it, as in the $1|1$ note, for future work.
\end{Remark}

\section{Related work: comparison with Khudaverdian--Voronov}\label{sec:relwork}

Khudaverdian and Voronov's theory of Berezinians via exterior powers and recurrent sequences~\cite{KV2005,KV2006} is the closest existing body of work to the present note, and a referee familiar with the Berezinian literature is likely to ask how the two relate. We summarize the comparison here to make the scope of our contribution (Remark~\ref{rem:scope}) precise.

\begin{itemize}\itemsep=0pt
\item \emph{Setting.} Khudaverdian--Voronov work with an arbitrary even linear operator $A$ on a general $p|q$-dimensional superspace $V$ over an unspecified (sufficiently general) ground ring, and study the characteristic function $R_A(z)=\mathrm{Ber}(1+zA)$ through its power expansions at $z=0$ and $z=\infty$. This note instead fixes a single, minimal ground ring, $R=K[\theta_1,\theta_2]$ (rank $r$ in Section~\ref{sec:rgen}), and studies an arbitrary even super-\emph{matrix} $X$ directly, not a one-parameter family $1+zA$.
\item \emph{Form of the answer.} Khudaverdian--Voronov show that the traces of the exterior powers $\Lambda^k A$ satisfy universal recurrence relations of period $q$, expressed via vanishing Hankel determinants in the Grothendieck ring of $\mathrm{GL}(V)$-representations, and use this to write $\mathrm{Ber}(A)$ as a \emph{ratio of polynomial invariants} of $A$ (traces of exterior powers), together with a superspace analogue of Cramer's rule. Our Theorems~\ref{thm:general-ber} and~\ref{thm:rgen-ber} instead give the $\theta$-expansion of $\mathrm{Ber}(X)$ directly in the entries of $X$'s blocks, as an explicit finite polynomial (linear in the top-degree part) with no auxiliary rational function or Hankel determinant construction; the two answers are of different logical type; one is coordinate- and representation-theoretic, the other is a closed-form coefficient computation for fixed, small nilpotence degree.
\item \emph{Multiplicativity.} Multiplicativity of the Berezinian is not the focus of~\cite{KV2005,KV2006}; it is used there as an input to the recurrence-relation machinery, not reproved from scratch. Theorem~\ref{thm:general-mult} (and its $r$-generator extension, Theorem~\ref{thm:rgen-mult}) instead gives a self-contained, block-level proof of multiplicativity for this specific family of ground rings, via the single trace-cyclicity identity $(\ast)$; this is a much narrower statement than the general theory but is proved independently of it.
\item \emph{Relation.} Because $R_r^{(2)}$ (or $R$, at $r=2$) is a specific, small ground ring, the abstract Khudaverdian--Voronov machinery applies to it as a special case and could in principle be specialized to re-derive our formulas; we have not carried out that specialization, and see the two approaches as complementary rather than competing: theirs gives structural, representation-theoretic control valid for any $p|q$ and any operator, ours gives a fully explicit, low-degree formula naturally suited to direct computation and to the second-order truncation of Section~\ref{sec:rgen}.
\end{itemize}

\section{Conclusion}

Passing from $K[\theta]/(\theta^2)$ to $K[\theta_1,\theta_2]$ removes the main degeneracy of the simplest exposition of super-linear algebra: the Berezinian's Schur-complement correction term $BD^{-1}C$, which vanishes identically with one odd generator, becomes a genuinely nonzero element controlled, at arbitrary block size, by the explicit matrix pairing $\Delta$ of Theorem~\ref{thm:general-ber}. We proved a closed-form Berezinian formula for arbitrary even $X\in M_{m|n}(R)$ (Theorem~\ref{thm:general-ber}) and a fully general multiplicativity theorem $\mathrm{Ber}(XY)=\mathrm{Ber}(X)\mathrm{Ber}(Y)$ (Theorem~\ref{thm:general-mult}), verified by an explicit trace-cyclicity cancellation rather than by appeal to the abstract theory, both specializing exactly to the previously known $1|1$ formulas (Corollaries~\ref{cor:recover11},~\ref{cor:recover11mult}) and illustrated on a fully worked $m=2,n=1$ numerical example (Example~\ref{ex:worked}). We also gave a fully general, sign-complete proof of graded cyclicity of the supertrace for equal-parity pairs and of $\mathrm{str}([A,B])=0$ for arbitrary block sizes $(m|n)$ (Theorems~\ref{thm:cyc},~\ref{thm:strzero}). We further extended the whole framework to an arbitrary number $r\geq2$ of odd generators (Section~\ref{sec:rgen}), with the scalar pairing $\Delta$ replaced by a $\Lambda^2(V)$-valued pairing (Theorems~\ref{thm:rgen-ber},~\ref{thm:rgen-mult}), identified in the scalar sub-case with a Pl\"ucker coordinate (Remark~\ref{rem:plucker}), and compared the resulting picture with the closest existing work, that of Khudaverdian and Voronov (Section~\ref{sec:relwork}). This resolves, in the direction of both block size and number of odd generators, the extension proposed at the end of the $1|1$ note. Natural next steps include removing the second-order truncation of Section~\ref{sec:rgen} to work over the full Grassmann algebra $\Lambda(K^r)$ for $r\geq4$, where $R_{\bar0}$ and $R_{\bar1}$ contain higher exterior powers and the formulas of Theorems~\ref{thm:rgen-ber}--\ref{thm:rgen-mult} would acquire further correction terms, and developing the super-Grassmannian interpretation of $\Delta$ sketched in Remark~\ref{rem:plucker}.

\subsection*{Acknowledgements}

The author thanks St Aloysius (Deemed to be University), Mangaluru, India, for support.

\pdfbookmark[1]{References}{ref}

\LastPageEnding


\begin{thebibliography}{99}
\footnotesize\itemsep=0pt

\bibitem{Berezin1987} Berezin F.A., \textit{Introduction to Superanalysis}, D. Reidel Publishing, 1987.

\bibitem{Leites1980} Leites D., Introduction to the theory of supermanifolds, \textit{Russian Math. Surveys} \textbf{35} (1980), no.~1, 1--64.

\bibitem{Manin1988} Manin Yu.I., \textit{Gauge Field Theory and Complex Geometry}, Grundlehren der mathematischen Wissenschaften 289, Springer, Berlin, 1988 (2nd ed.\ 1997).

\bibitem{Varadarajan2004} Varadarajan V.S., \textit{Supersymmetry for Mathematicians: An Introduction}, Courant Lecture Notes 11, American Mathematical Society, 2004.

\bibitem{DeligneMorgan1999} Deligne P., Morgan J.W., Notes on supersymmetry (following Joseph Bernstein), in \textit{Quantum Fields and Strings: A Course for Mathematicians}, Vol.~1, American Mathematical Society, 1999, 41--97.

\bibitem{Rogers2007} Rogers A., \textit{Supermanifolds: Theory and Applications}, World Scientific Publishing, Hackensack, NJ, 2007.

\bibitem{KV2005} Khudaverdian H.M., Voronov Th.Th., Berezinians, exterior powers and recurrent sequences, \textit{Lett. Math. Phys.} \textbf{74} (2005), no.~2, 201--228.

\bibitem{KV2006} Khudaverdian H.M., Voronov Th.Th., New facts about Berezinians, in \textit{Supersymmetries and Quantum Symmetries 2005}, Editors E. Ivanov, B. Zupnik, Dubna, 2006, 393--398.

\bibitem{Voronov1992} Voronov Th.Th., \textit{Geometric Integration Theory on Supermanifolds}, Harwood Academic Publishers, 1992 (2nd ed.\ Cambridge Scientific Publishers, 2014).

\bibitem{BBH1991} Bartocci C., Bruzzo U., Hern\'andez-Ruip\'erez D., \textit{The Geometry of Supermanifolds}, Mathematics and its Applications 71, Kluwer Academic Publishers, Dordrecht, 1991.

\end{thebibliography}
\end{document}